\def\ve{{\varepsilon}}
\def\N{{\mathbb{N}}}
\def\R{{\mathbb{R}}}
\def\BB{{\mathcal{B}}}
\newcommand{\nc}{\normalcolor}
\begin{document}
\title[Periodicity]{Infinite-horizon problems under periodicity constraint}

\author[BLOT BOUADI NAZARET]
{J.~BLOT, A.~BOUADI and B.~NAZARET}

\address{Jo\"{e}l Blot: Laboratoire SAMM EA 4543,\newline
Universit\'{e} Paris 1 Panth\'{e}on-Sorbonne, centre P.M.F.,\newline
90 rue de Tolbiac, 75634 Paris cedex 13,
France.}
\email{blot@univ-paris1.fr}
\address{Abdelkader Bouadi: D\'epartement de Technologie, \newline
Facult\'e de Technologie, \newline
Universit\'e 20 ao\^ut 1955-Skikda,\newline
BP 26 Route d'El-Hadaiek-21000,\newline
Skikda, Alg\'erie}
\email{abdelkader.bouadi14@gmail.com}
\address{Bruno Nazaret: Laboratoire SAMM EA 4543,\newline
Universit\'{e} Paris 1 Panth\'{e}on-Sorbonne, centre P.M.F.,\newline
90 rue de Tolbiac, 75634 Paris cedex 13,
France.}
\email{bruno.nazaret@univ-paris1.fr}
\date{January 29, 2016}
\begin{abstract} We study some infinite-horizon optimization problems on spaces of periodic functions, for non periodic Lagrangians. The main strategy relies on the reduction to finite horizon thanks to the introduction of an averaging operator. We then provide existence results and necessary optimality conditions, in which the corresponding averaged Lagrangian appears.
\end{abstract}

 \maketitle
\numberwithin{equation}{section}
\newtheorem{theorem}{Theorem}[section]
\newtheorem{lemma}[theorem]{Lemma}
\newtheorem{example}[theorem]{Example}
\newtheorem{remark}[theorem]{Remark}
\newtheorem{definition}[theorem]{Definition}
\newtheorem{corollary}[theorem]{Corollary}
\newtheorem{proposition}[theorem]{Proposition}
\noindent
{MSC 2010:}  49K30, 49N20.\\
{Key words:} infinite-horizon variational problem, weighted Sobolev spaces, periodic trajectory.


\section{Introduction}
This paper is devoted to the study of optimal periodic trajectories in a model which is not periodic.

In the books of Colonius \cite{Co} and Kovaleva \cite{Ko} we can find a lot of problems which motivate a theory of periodic optimal control, for instance in chemical engineering, flight optimal performance, harvesting, maintenance,  or in dynamic pricing. In a different framework the question of periodicity in infinite-horizon optimal control problems is considered in \cite{AB}.
\medskip

In the variational setting of macroeconomic optimal growth theory \cite{Sa}, the model problems is usually set as the minimization of a functional taking the form
\begin{equation}\label{Eq:Model}
p\mapsto \int_0^{+ \infty} e^{-rt} L(t, p(t), p'(t)) \,dt,
\end{equation}
among functions $p:\R^+\to\R^n$ such that $p(0)=p_0\in\R^n$. Here, $r$ is a positive real number representing a discount rate (also called a rate of preference for the present by the economists). In his pioneering work \cite{Ra}, Ramsey does not use such such a discount rate since it disadvantage the future generations with respect to the present one, at the price of mathematical difficuties for existence of solutions. An alternative way to overcome this issue but still ensure an intergenerational equity in presence of a discount rate is to only permit periodic processes with a period equal to the lifespan of a generation, though the function has no reason to satisfy any periodicity condition. Then the welfare of each future generation will be the same one that the welfare of the present generation. We can extend this viewpoint to ecological models (for forests or fisheries management for instance) as considered in \cite{Cl}. Here again, ff we only permit periodic processes, we avoid overpopulation of extinction phenomenons of living species.
\medskip

The existence of a solution for problem \eqref{Eq:Model} is usually obtained under the assumtion of joint convexity in $(p,p')$ variables of the Lagrangian $L$, plus growth conditions. It would be possible to relax this assumption to convexity only on the $p'$ variable, if the measure with density $t\mapsto e^{-rt}$ satisfied a Sobolev embedding on the half-line. Unfortunately, it is known to be false (see \cite{An}). We shall overcome this difficulty by reducing the infinite-horizon problem to a finite horizon one, noticing that, for any $T$-periodic state function $p:\R^+\to\R$, we can write formally
\begin{equation}\label{Eq:Averaging}
\int_0^{+ \infty} e^{-rt} L(t, p(t), p'(t)) \,dt = \frac{1}{1-e^{-rT}}\int_0^{T} e^{-rt}{\mathcal A_1}(L)(t,p(t),p'(t))\, dt,
\end{equation}
where, for any $(t,x,y)\in[0,T]\times\R^n\times\R^n$,
\[
{\mathcal A_1}(L)(t,x,y) = {\mathcal A}\left(L(\cdot,x,y)\right)(t) = (1 - e^{-rT}) \sum_{k=0}^{+ \infty} e^{- rk T} L(t + kT,x,y).
\]
It is worth noticing that the needed Sobolev embedding holds on $[0,T]$, making us enable to use standard existence results, such as those which can be found in \cite{BGH} for instance, provided we are able to translate assumptions on ${\mathcal A}_1(L)$ into assumptions on $L$.

The operator ${\mathcal A}$ introduced above has a very interesting interpretation as a $L^2$~-~{projection} on the space of periodic functions. This motivates the study of the following simple minimization problem. Let $x:\R^+\to\R$ be a function and $T>0$ a fixed period. We want to find the (unique) solution to
\begin{equation}\label{Eq:econometrics}
\inf_{p \ T-\mbox{periodic}, \ a\in\R^n}\int_0^{+ \infty} e^{-rt}  \vert x(t) - p(t) - t a \vert^2 \,dt
\end{equation}
The problem \eqref{Eq:econometrics} addresses the problem of finding, in the sense of the least square method, the best approximation of the function $x$ as an ocsillation around a linear function. Such concepts can be in particular found in Econometrics (see \cite{Mad}), where $(p,a)$ would represent respectively the seasonality and the trend.

\medskip
There exists a litterature on the Calculus of Variations and on optimal control theory in continuous time and infinite horizon in presence of a discount rate. The unique general treatise on this theory is \cite{CHL}. For existence results, one can quote \cite{P,LPW} and references therein. The question of the necessary conditions of optimality is treated in \cite{JBPM,JBNH1} and references therein, on the subclass of the bounded trajectories in \cite{JBPCa1,JBPCa2}, while the subclass of almost-periodic trajectories appears in \cite{JBAB1}. Finally, the case of the subclass of periodic trajectories is studied in \cite{CK}. In this last paper, the authors deals criterions of the form $\int_0^{+ \infty} e^{-rt} g(x(t), u(t)) dt$, that is a $g$ which is autonomous. In our problem (\ref{Eq:Model}) $L$ depends upon $t$ in a non necessary $T$-periodic way. Concerning discrete time problems, we refer to \cite{JBNH2} and references therein.
\medskip

Now we describe the contents of the paper. In Section 2, we introduce some notations for the used function spaces, and recall some basic results. In Section 3 we deal with the orthogonal projection on a subspace of periodic functions, and solve problem \eqref{Eq:econometrics} (Theorem \ref{th35}). Section 4 is devoted to existence results on problems of the form \eqref{Eq:Model}, in Sobolev spaces of periodic functions (Theorem \ref{th25} and\ref{th44}). We end the paper by establishing some necessary conditions of optimality in problem \eqref{Eq:Model} (Theorem \ref{51}). The most important fact here is that the usual Euler-Lagrange equation is satisfied by the {\em averaged} version of the Lagrangian.


\section{Notation and Preliminaries}

We set here some notations related to the functional framework and recall some basic facts.

Let $n\in\N^*$. For any vectors $x=(x^i)_{1\leq i\leq n}$ and $y=(y^i)_{1\leq i\leq n}$ in $\R^n$, $x\cdot y := \sum_{i=1}^nx^iy^i$ will stand for the usual Euclidean inner product and the induced norm will be denoted by $|\cdot|$.

When $X$ and $Y$ are Banach spaces, $C^0(X,Y)$ (resp. $C^1(X,Y)$) denotes the space of continuous (resp. continuously Fr\'echet-differentiable) functions from $X$ to $Y$.

The Lebesgue $\sigma$-algebra on $\R_+$ is denoted by $\overline\BB(\R_+)$. For any $r>0$, we define the measure $\mu_r$ as
\[
\forall B\in\overline\BB(\R_+), \ \mu_r(B) = \int_Be^{-rt}\,d\lambda(t),
\]
where $\lambda$ stands for the Lebesgue measure. Notice that the $\mu_r$-neglectibility of a set is, for any $r>0$, equivalent to the $\lambda$-neglectibility, thanks to the positivity of the density function.

The associated Lebesgue spaces ${\mathcal L}^\alpha(I,\mu_r,;\R^n)$ (resp. $L^\alpha(I,\mu_r;\R^n)$), with $\alpha\geq 1$ and $I$ any interval in $\R_+$, are the space of all (resp. class of) measurable $\R^n$-valued functions on $I$ whose $\alpha$th-power is $\mu_r$-integrable and the corresponding Sobolev spaces $W^{1,\alpha}(I,\mu_r;\R^n)$ are defined as
\[
W^{1,\alpha}(I,\mu_r;\R^n) := \left\{ f\in L^\alpha(I,\mu_r;\R^n); \ f'\in L^\alpha(I,\mu_r;\R^n)\right\},
\]
$f'$ being understood as the distributional first derivative of $f$. Endowed respectively with the norms
\[
\|f\|_{L^\alpha(I,\mu_r;\R^n)} := \left(\int_I|f(t)|^\alpha \,d\mu_r(t)\right)^{\frac{1}{\alpha}}
\]
and
\[
\|f\|_{W^{1,\alpha}(I,\mu_r;\R^n)} := \left(\|f\|_{L^\alpha(I,\mu_r;\R^n)}^\alpha+\|f\|_{L^\alpha(I,\mu_r;\R^n)}^\alpha\right)^{\frac{1}{\alpha}},
\]
$L^\alpha(I,\mu_r;\R^n)$ and $W^{1,\alpha}(I,\mu_r;\R^n)$ are Banach spaces, both reflexive if $s>1$. The usual Lebesgue and Sobolev spaces w.r.t. the Lebesgue measure will be simply respectively denoted by $L^\alpha(I;\R^n)$ and $W^{1,\alpha}(I;\R^n)$. Notice that if $I$ is bounded (for instance $I=[0,T]$), we have
\[
\forall r>0, \ L^\alpha(I,\mu_r;\R^n) = L^\alpha(I;\R^n) \mbox{ and } W^{1,\alpha}(I,\mu_r;\R^n) = W^{1,\alpha}(I;\R^n).
\]
Let us now introduce classical spaces of periodic functions. Let, for any $T>0$, $P_T^0(\R_+,\R^n)$ be the space of continuous $T$-periodic functions from $\R_+$ to $\R^n$ and $P^1_T(\R_+, \R^n) := P_T^0(\R_+, \R^n) \cap C^1(\R_+, \R^n)$. In addition, we define $P_{T,0}^0(\R_+, \R^n)$ (resp. $P_{T,0}^1(\R_+, \R^n)$) as the space of functions $u \in P_T^0(\R_+, \R^n)$ (resp. $P_{T,0}^1(\R_+, \R^n)$) such that $u(0)=0$.

We also recall some results on the periodic extension of a funtion $f:[0,T) \to \R^n$ defined as
\[
\forall k\in\N, \forall t\in[kT,(k+1)T), \ {\mathcal E}_T(f)(t):= f(t-kT).
\]
It is clear that
\begin{itemize}
\item[(i)] ${\mathcal E}_T$ is a linear operation.
\item[(ii)] ${\mathcal E}_T(f)$ is $T$-periodic on $\R_+$ and
\[
{\mathcal E}_T(f)\in P_T^0(\R_+,\R^n)\Longleftrightarrow\lim_{t\to T^-}f(t)=f(0).
\]
\end{itemize}
In addition, the following holds.
\begin{proposition}\label{pro25}
Let $\alpha\geq 1$ and let $\overline{P_T^0}^\alpha(\R_+,\mu_r;\R^n)$ be the closure of $P_T^0(\R_+;\R^n)$ in $L^\alpha(\R_+,\mu_r;\R^n)$. Then,
\begin{itemize}
\item[(i)] $\overline{P_T^0}^\alpha(\R_+,\mu_r;\R^n) = \left\{ f\in L^\alpha(\R_+,\mu_r;\R^n);\ f(t+T) = f(t) \mbox{ for a.e. }t\right\}$.
\item[(ii)] ${\mathcal E}_T$ is a continuous linear map from $L^\alpha([0,T),\mu_r;\R^n)$ to $\overline{P_T^0}^\alpha(\R_+,\mu_r;\R^n)$ and $\forall f\in L^\alpha([0,T),\mu_r;\R^n)$,
\begin{equation}\label{homot}
\|{\mathcal E}_T(f)\|_{L^\alpha(\R_+,\mu_r;\R^n)} = \left(\frac{1}{1- e^{-rT}}\right)^{\frac{1}{\alpha}}\|f\|_{L^\alpha([0,T[,\mu_r;\R^n)}.
\end{equation}
\end{itemize}
\end{proposition}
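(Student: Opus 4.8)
The plan is to prove the two claims in turn, using the elementary structure of periodic functions together with the explicit behaviour of the measure $\mu_r$ under translation by $T$.

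For part (i), let me denote by $V$ the set on the right-hand side, namely the space of $f\in L^\alpha(\R_+,\mu_r;\R^n)$ with $f(t+T)=f(t)$ for a.e.\ $t$. First I would check that $V$ is a closed subspace of $L^\alpha(\R_+,\mu_r;\R^n)$: if $f_k\to f$ in $L^\alpha$, then after passing to a subsequence $f_k\to f$ $\mu_r$-a.e.\ (hence $\lambda$-a.e., since the density is positive), and the periodicity relation passes to the pointwise limit. Since every $g\in P_T^0(\R_+;\R^n)$ lies in $V$ (continuous $T$-periodic functions are bounded, hence in $L^\alpha(\R_+,\mu_r;\R^n)$ because $\mu_r(\R_+)<\infty$), we get $\overline{P_T^0}^\alpha\subseteq V$. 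For the reverse inclusion, take $f\in V$; then $f|_{[0,T)}\in L^\alpha([0,T);\R^n)$ and $f={\mathcal E}_T(f|_{[0,T)})$ a.e. So it suffices to approximate $f|_{[0,T)}$ in $L^\alpha([0,T);\R^n)$ by a sequence of continuous functions $h_k$ on $[0,T)$ that additionally satisfy $h_k(0)=\lim_{t\to T^-}h_k(t)$ — this is routine (approximate by a continuous function, then modify it on a small interval near $T$ to restore the matching endpoint condition, at a cost that vanishes in $L^\alpha$). Then ${\mathcal E}_T(h_k)\in P_T^0(\R_+;\R^n)$ by property (ii) of ${\mathcal E}_T$, and by the norm identity \eqref{homot} (applied to $f|_{[0,T)}-h_k$, whose validity I establish next) we get ${\mathcal E}_T(h_k)\to f$ in $L^\alpha(\R_+,\mu_r;\R^n)$, so $f\in\overline{P_T^0}^\alpha$.

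For part (ii), the key computation is the norm identity. For $f\in L^\alpha([0,T),\mu_r;\R^n)=L^\alpha([0,T);\R^n)$, I would write, using that ${\mathcal E}_T(f)$ restricted to $[kT,(k+1)T)$ equals $t\mapsto f(t-kT)$,
\[
\|{\mathcal E}_T(f)\|_{L^\alpha(\R_+,\mu_r;\R^n)}^\alpha = \sum_{k=0}^{+\infty}\int_{kT}^{(k+1)T}|f(t-kT)|^\alpha e^{-rt}\,dt = \sum_{k=0}^{+\infty}e^{-rkT}\int_0^{T}|f(s)|^\alpha e^{-rs}\,ds,
\]
where the substitution $s=t-kT$ gives $e^{-rt}=e^{-rkT}e^{-rs}$. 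Summing the geometric series $\sum_k e^{-rkT}=\frac{1}{1-e^{-rT}}$ yields $\|{\mathcal E}_T(f)\|_{L^\alpha(\R_+,\mu_r;\R^n)}^\alpha=\frac{1}{1-e^{-rT}}\|f\|_{L^\alpha([0,T),\mu_r;\R^n)}^\alpha$, which is \eqref{homot} after taking $\alpha$th roots; linearity of ${\mathcal E}_T$ is property (i), and the identity shows ${\mathcal E}_T$ is bounded (in fact an isomorphism onto its image) from $L^\alpha([0,T),\mu_r;\R^n)$ into $L^\alpha(\R_+,\mu_r;\R^n)$, with image contained in $V=\overline{P_T^0}^\alpha$ by part (i) since ${\mathcal E}_T(f)$ is $T$-periodic a.e.

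The only genuinely delicate point is the density step in part (i): approximating an $L^\alpha([0,T);\R^n)$ function by continuous functions whose boundary values match so that the periodic extension is genuinely continuous (not merely periodic a.e.). Everything else — closedness of $V$, the Fubini/Tonelli interchange justifying the termwise integration (legitimate since the integrand is nonnegative), and the geometric series summation — is routine. I would also remark that, because the measure is finite, there is no integrability obstruction at infinity, so the whole argument is purely a matter of rearranging a convergent series of integrals over one period.
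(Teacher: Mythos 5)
Your proposal is correct and follows essentially the same route as the paper: the heart of both arguments is the decomposition of $\R_+$ into the intervals $[kT,(k+1)T)$, the change of variable producing the factor $e^{-rkT}$, and the geometric series $\sum_k e^{-rkT}=\frac{1}{1-e^{-rT}}$, combined with a density argument on one period to get the reverse inclusion in (i). The only differences are minor: you prove the inclusion $\overline{P_T^0}^\alpha(\R_+,\mu_r;\R^n)\subseteq V$ directly (closedness of the a.e.-periodic subspace under a.e.-convergent subsequences, plus $P_T^0\subset L^\alpha(\R_+,\mu_r;\R^n)$ by finiteness of $\mu_r$), where the paper cites Proposition~3 of \cite{JBAB1}, and you approximate $f|_{[0,T)}$ by continuous functions with matched endpoint values, where the paper uses the density of $C^\infty_c((0,T),\R^n)$ so that the matching condition is automatic; both variants are sound and buy you a slightly more self-contained proof at no real cost.
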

\begin{proof}
We know from Proposition 3 in \cite{JBAB1} that
\[
\overline{P_T^0}^\alpha(\R_+,\mu_r;\R^n) \subset \left\{ f\in L^\alpha(\R_+,\mu_r;\R^n);\ f(t+T) = f(t) \mbox{ for a.e. }t\right\}.
\]
Conversely, Let $f\in L^\alpha(\R_+,\mu_r;\R^n)$ satisfying $f(t+T) = f(t)$ for a.e. $t$. Since $C^{\infty}_c((0,T), \R^n)$ (here the space of the $C^{\infty}$ functions defined on $[0,T]$ with compact support in $(0,T)$) is dense in $L^\alpha(0,T; \R^n)$ (see \cite{Br}, Corollary 4.23, p. 109), we can find for any positive $\epsilon$ some $f{\ve} \in C^{\infty}_c((0,T), \R^n)$ such that
\[
\int_0^T | f(t) -f_{\epsilon}(t) |^\alpha \,dt\leq \varepsilon^\alpha.
\]
In addition, since the support of $f_{\varepsilon} \subset (0,T)$, we have ${\mathcal E}_T(f_{\epsilon}) \in P_T^0(\R_+, \R^n)$. We then get
\begin{equation*}
\begin{split}
& \int_0^{+\infty} e^{-rt}\vert f(t) -  {\mathcal E}_T(f_{\epsilon})(t) \vert^\alpha \,dt = \sum_{k=0}^{+\infty} \int_{kT}^{(k+1)T}e^{-rt}\vert f(t) -  {\mathcal E}_T(f_{\epsilon})(t) \vert^\alpha \,dt \\
& =  \sum_{k=0}^{+\infty} \int_0^T e^{-rt} e^{-rkT} \vert f(t+kT) -  {\mathcal E}_T(f_{\epsilon})(t+kT) \vert^\alpha \,dt\\
&= \sum_{k=0}^{+\infty} e^{-rkT} \int_0^T e^{-rt} \vert f(t) -  {f_{\epsilon}}(t) \vert^\alpha \,dt\\
& =  \frac{1}{1- e^{-rT}} \int_0^T \vert f(t) -  {f_{\epsilon}}(t) \vert^\alpha \,dt \\
& \leq \frac{1}{1- e^{-rT}} \varepsilon^\alpha.
\end{split}
\end{equation*}
That ends the proof of both (i) and (ii), since it has already been noticed that ${\mathcal E}_T$ is linear and by applyng the computation above to an arbitrary $f\in L^\alpha(\R_+,\mu_r;\R^n)$.
\end{proof}

We have an analogue result for Sobolev spaces, if we naturally restrict to $0$ Dirichlet boundary conditions, that is to the space $W^{1,\alpha}_0([0,T);\R^n)$ of functions $f$ in $W^{1,\alpha}([0,T);\R^n)$ such that $f(0)=f(T)=0$.

\begin{proposition}\label{pro26} For any $\alpha\geq 1$, let $W^{1,\alpha}_{T,0}(\R_+, \mu_r; \R^n)$ be the closure of the space $P_{T,0}^1(\R_+, \R^n)$ in $W^{1,\alpha}(\R_+, \mu_r; \R^n)$. Then,
\[
W^{1,\alpha}_{T,0}(\R_+, \mu_r; \R^n) = W^{1,\alpha}(\R_+, \mu-r; \R^n) \cap P^0_{T,0}(\R_+, \R^n).
\]
Moreover, ${\mathcal E}_T$ is a continuous linear map from $W^{1,\alpha}_0([0,T);\R^n)$ to $W^{1,\alpha}_{T,0}(\R_+, \mu_r; \R^n)$ and, for any $f\in W^{1,\alpha}_0([0,T);\R^n)$,
\begin{equation}\label{sob-extend}
\|{\mathcal E}_T(f)\|_{W^{1,\alpha}(\R_+, \mu_r; \R^n)} \leq \left(\frac{1}{1- e^{-rT}}\right)^{\frac{1}{\alpha}}\|f\|_{W^{1,\alpha}([0,T);\R^n)}.
\end{equation}
\end{proposition}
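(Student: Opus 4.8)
The plan is to follow the scheme of the proof of Proposition \ref{pro25}, the only genuinely new ingredient being the control of the derivative through the identity $({\mathcal E}_T(f))'={\mathcal E}_T(f')$, which holds precisely because of the homogeneous boundary conditions. First I would settle the norm estimate together with the well-definedness of ${\mathcal E}_T$ on $W^{1,\alpha}_0([0,T);\R^n)$. Let $f\in W^{1,\alpha}_0([0,T);\R^n)$, so $f(0)=f(T)=0$; by the criterion (ii) recalled before Proposition \ref{pro25}, ${\mathcal E}_T(f)\in P_T^0(\R_+,\R^n)$, and ${\mathcal E}_T(f)$ is locally absolutely continuous, being continuous on $\R_+$ and of class $W^{1,\alpha}$ on each $(kT,(k+1)T)$. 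Computing $\int_0^{+\infty}{\mathcal E}_T(f)\cdot\phi'\,dt$ for $\phi\in C_c^\infty((0,+\infty),\R^n)$ and integrating by parts on each $(kT,(k+1)T)$, the junction terms at the points $kT$ telescope and vanish thanks to the continuity of ${\mathcal E}_T(f)$ and the compactness of the support of $\phi$, so $({\mathcal E}_T(f))'={\mathcal E}_T(f')$ a.e. Applying \eqref{homot} of Proposition \ref{pro25} to both $f$ and $f'$ then gives
\[
\|{\mathcal E}_T(f)\|_{W^{1,\alpha}(\R_+,\mu_r;\R^n)}^\alpha = \|{\mathcal E}_T(f)\|_{L^\alpha(\R_+,\mu_r;\R^n)}^\alpha + \|{\mathcal E}_T(f')\|_{L^\alpha(\R_+,\mu_r;\R^n)}^\alpha = \frac{1}{1-e^{-rT}}\,\|f\|_{W^{1,\alpha}([0,T);\R^n)}^\alpha ,
\]
which yields \eqref{sob-extend} (indeed with equality) and in particular ${\mathcal E}_T(f)\in W^{1,\alpha}(\R_+,\mu_r;\R^n)$.

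Next I would show that ${\mathcal E}_T$ actually lands in $W^{1,\alpha}_{T,0}(\R_+,\mu_r;\R^n)$, obtaining at the same time the inclusion $\supseteq$. Since $C_c^\infty((0,T),\R^n)$ is dense in $W^{1,\alpha}_0([0,T);\R^n)$ (see \cite{Br}), choose $g_\ve\to f$ in $W^{1,\alpha}([0,T);\R^n)$ with $g_\ve\in C_c^\infty((0,T),\R^n)$; then ${\mathcal E}_T(g_\ve)\in P_{T,0}^1(\R_+,\R^n)$ because $g_\ve$ and all its derivatives vanish near $0$ and $T$, and by the norm identity above $\|{\mathcal E}_T(f)-{\mathcal E}_T(g_\ve)\|_{W^{1,\alpha}(\R_+,\mu_r;\R^n)}\to 0$, whence ${\mathcal E}_T(f)\in W^{1,\alpha}_{T,0}(\R_+,\mu_r;\R^n)$. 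If now $f\in W^{1,\alpha}(\R_+,\mu_r;\R^n)\cap P_{T,0}^0(\R_+,\R^n)$, its restriction to $[0,T)$ lies in $W^{1,\alpha}_0([0,T);\R^n)$ (periodicity together with $f(0)=0$ forces $f(T)=0$, and on the bounded interval $[0,T]$ the measure $\mu_r$ is equivalent to the Lebesgue measure), while $T$-periodicity gives ${\mathcal E}_T(f|_{[0,T)})=f$, so $f\in W^{1,\alpha}_{T,0}(\R_+,\mu_r;\R^n)$.

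For the reverse inclusion $\subseteq$, let $f$ lie in the closure of $P_{T,0}^1(\R_+,\R^n)$ in $W^{1,\alpha}(\R_+,\mu_r;\R^n)$, with $f_n\to f$, $f_n\in P_{T,0}^1(\R_+,\R^n)$. Convergence in $L^\alpha(\R_+,\mu_r;\R^n)$ and closedness of $\overline{P_T^0}^\alpha(\R_+,\mu_r;\R^n)$ give, via Proposition \ref{pro25}(i), that $f(t+T)=f(t)$ for a.e. $t$; moreover $f\in W^{1,\alpha}(\R_+,\mu_r;\R^n)$ by construction. On any bounded interval $\mu_r$ and the Lebesgue measure are equivalent, so $f_n\to f$ in the usual $W^{1,\alpha}([0,R];\R^n)$, and the one-dimensional Sobolev embedding provides a continuous representative of $f$ to which $f_n$ converges uniformly on $[0,R]$; hence $f(0)=\lim_n f_n(0)=0$ and the continuous representative satisfies $f(t+T)=f(t)$ for every $t$, so $f\in P_{T,0}^0(\R_+,\R^n)$. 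This completes the proof.

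The step I expect to be the main obstacle is the distributional identity $({\mathcal E}_T(f))'={\mathcal E}_T(f')$: this is exactly where the hypothesis $f(0)=f(T)=0$ is essential, since otherwise the periodic extension would carry Dirac masses at the points $kT$, its distributional derivative would fail to be in $L^\alpha(\R_+,\mu_r;\R^n)$, and the estimate \eqref{sob-extend} would break down — which is also the reason one must restrict to the subspace with zero Dirichlet data.
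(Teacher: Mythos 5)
Your argument is correct and follows essentially the same route as the paper: the inclusion of the closure into $W^{1,\alpha}(\R_+,\mu_r;\R^n)\cap P^0_{T,0}(\R_+,\R^n)$ via the equivalence of $\mu_r$ with the Lebesgue measure on compact intervals together with the one-dimensional Sobolev embedding, and the converse inclusion via density of smooth compactly supported functions in $W^{1,\alpha}_0((0,T);\R^n)$, periodic extension, the identity $({\mathcal E}_T(\varphi))'={\mathcal E}_T(\varphi')$ and Proposition \ref{pro25}; your explicit verification of $({\mathcal E}_T(f))'={\mathcal E}_T(f')$ by telescoping the boundary terms is a welcome detail that the paper leaves implicit. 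One small correction: the parenthetical claim that \eqref{sob-extend} holds with equality is not accurate, because \eqref{homot} expresses $\|{\mathcal E}_T(f)\|_{L^\alpha(\R_+,\mu_r;\R^n)}$ in terms of the $\mu_r$-weighted norm of $f$ on $[0,T)$, whereas the right-hand side of \eqref{sob-extend} is the unweighted $W^{1,\alpha}([0,T);\R^n)$ norm; since $e^{-rt}\leq 1$ on $[0,T)$ the weighted norm is dominated by the unweighted one, so the stated inequality (and hence your proof) still goes through.
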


\begin{proof} 
Let us first take $f\in W^{1,\alpha}_{T,0}(\R_+, \mu_r; \R^n)$. By definition, $f\in W^{1,\alpha}(\R_+, \mu_r; \R^n)$ and we can find a sequence $(f_m)_{m\in\N}$ with values in $P_{T,0}^1(\R_+, \R^n)$ converging in $W^{1,\alpha}(\R_+, \mu_r; \R^n)$ to $f$. Since, for any $K>0$ and any $B\in \overline\BB([0,T])$, we have
\[
e^{-rK}\mu_0(B)\leq \mu_r(B)\leq \mu_0(B),
\]
and, using standard Sobolev embeddings on the real line, we get the convergence in $C^0([0,K],\R^n)$, for any $K>0$. The conclusion $f\in P^0_{T,0}(\R_+, \R^n)$ follows immediately.
Conversely, let $f\in W^{1,\alpha}(\R_+, \mu_r; \R^n) \cap P^0_{T,0}(\R_+, \R^n)$ and set
\[
f_T = f_{|(0,T)} \in W^{1,\alpha}_0((0,T),\R^n).
\]
Then, for any $\ve>0$, there exists $\varphi_\ve\in C^1_c((0,T),\R^n)$ such that
\[\| f_T-\varphi_\ve\|_{W^{1,\alpha}_0((0,T),\R^n)}\leq \ve.
\]
Extending $\varphi_\ve$ on $[0,T]$ by setting $\varphi_\ve(0)=\varphi_\ve(T)=0$ yields ${\mathcal E}_T(\varphi_\ve)\in P_{T,0}^1(\R_+, \R^n)$. In addition, since $f\in P^0_{T,0}(\R_+, \R^n)$, we have ${\mathcal E}_T(f_T)=f$. Using Proposition~\ref{pro25} and the fact that ${\mathcal E}_T(\varphi_\ve)'={\mathcal E}_T(\varphi_\ve')$ a.e. on $\R_+$, we finally get
\[
\|f-{\mathcal E}_T(\varphi_\ve)\|_{W^{1,\alpha}(\R_+,\mu_r,\R^n)} \leq \left(\frac{1}{1- e^{-rT}}\right)^{\frac{1}{\alpha}}\ve,
\]
and the conclusion. The inequality \eqref{sob-extend} is then an easy consequence of Proposition \ref{pro25}.
\end{proof}

\vskip5mm
\section{The $L^2$-Projection on a Lebesgue space of periodic functions}%

\subsection{The averaging operator}%

We introduce and study here the main tool we will use in order to reduce variational problems that are set on $\R_+$ to finite horizon.

\begin{theorem}\label{def-averaging}
When $g : \R_+ \rightarrow \R^n$ and when $s \in [0,T]$, we set 
\[
\mathcal A(g) := \left[ s \mapsto (1 - e^{-rT}) \sum_{k=0}^{+ \infty} e^{- rk T} g(s + kT)\right]
\] 
when it is defined. Let $\alpha \in [1, + \infty)$. Then the following assertion holds:
\[
g \in {\mathcal L}^{\alpha}(\R_+, \mu_r; \R^n) = {\mathcal L}^{\alpha}(\R_+, \overline{\mathfrak B}(\R_+), \mu_r; \R^n) \Longrightarrow A(g) \in 
{\mathcal L}^{\alpha}(0,T; \R^n)
\]
 and moreover, for all  $g \in {\mathcal L}^{\alpha}(\R_+, \mu_r; \R^n)$, we have 
$$\Vert A(g)
\Vert_{L^{\alpha}(0,T)} \leq \left( \frac{1- e^{-rT}}{e^{-rT}} \right)^{\frac{1}{\alpha}} \Vert g \Vert_{L^{\alpha}(\R_+, \mu_r)}.$$
And so $A$ is a linear bounded operator from  $L^{\alpha}(\R_+, \mu_r; \R^n)$ into $L^{\alpha}(0,T; \R^n)$.
\end{theorem}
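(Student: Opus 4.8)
The plan is to reduce everything to a single application of Jensen's inequality, exploiting the fact that the coefficients $w_k := (1 - e^{-rT})e^{-rkT}$ defining $\mathcal{A}$ satisfy $\sum_{k=0}^{+\infty} w_k = 1$, so that $(w_k)_{k\in\N}$ is a probability distribution on $\N$. First I would work not with $\mathcal{A}(g)$ directly but with the nonnegative auxiliary series $h(s) := (1 - e^{-rT})\sum_{k=0}^{+\infty} e^{-rkT}\,|g(s+kT)|$, which is an everywhere-defined $[0,+\infty]$-valued measurable function on $[0,T]$, being a pointwise limit of partial sums of measurable functions. The aim is to show $h \in L^\alpha(0,T)$ with the stated norm bound; once this is done, $h(s)<+\infty$ for a.e.\ $s$, hence the series defining $\mathcal{A}(g)(s)$ converges absolutely for a.e.\ $s$ with $|\mathcal{A}(g)(s)| \le h(s)$, and $\mathcal{A}(g)$ is measurable as an a.e.-limit of measurable partial sums, so $\mathcal{A}(g) \in \mathcal{L}^\alpha(0,T;\R^n)$ with the same bound.

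To estimate $\|h\|_{L^\alpha(0,T)}$, I would use the pointwise inequality $\left(\sum_k w_k x_k\right)^\alpha \le \sum_k w_k x_k^\alpha$, valid in $[0,+\infty]$ for $x_k\ge 0$ because $x\mapsto x^\alpha$ is convex on $[0,\infty)$ and $\sum_k w_k = 1$ (one justifies it by Jensen's inequality on finite truncations followed by monotone convergence), applied with $x_k = |g(s+kT)|$. Integrating over $[0,T]$ and invoking Tonelli's theorem on the resulting nonnegative double sum gives, after the translation $t = s+kT$ in each term,
\[
\int_0^T h(s)^\alpha\,ds \;\le\; \sum_{k=0}^{+\infty} w_k \int_0^T |g(s+kT)|^\alpha\,ds \;=\; \sum_{k=0}^{+\infty} w_k \int_{kT}^{(k+1)T} |g(t)|^\alpha\,dt.
\]

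The key computation is then the following: on $[kT,(k+1)T]$ one has $|g(t)|^\alpha = e^{rt}e^{-rt}|g(t)|^\alpha \le e^{r(k+1)T}e^{-rt}|g(t)|^\alpha$, and the resulting coefficient collapses, $w_k\,e^{r(k+1)T} = (1-e^{-rT})e^{-rkT}e^{r(k+1)T} = (1-e^{-rT})e^{rT}$, independently of $k$. Hence
\[
\int_0^T h(s)^\alpha\,ds \;\le\; (1-e^{-rT})e^{rT}\sum_{k=0}^{+\infty}\int_{kT}^{(k+1)T} e^{-rt}|g(t)|^\alpha\,dt \;=\; \frac{1-e^{-rT}}{e^{-rT}}\,\|g\|_{L^\alpha(\R_+,\mu_r)}^\alpha,
\]
which is exactly the claimed inequality after taking $\alpha$-th roots. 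Finally, linearity of $\mathcal{A}$ is immediate from its definition on the a.e.-defined functions, and since $\mu_r$-null sets coincide with $\lambda$-null sets (noted in the preliminaries), modifying $g$ on a null set alters $\mathcal{A}(g)$ only on a null set, so $\mathcal{A}$ passes to equivalence classes and, together with the norm estimate, yields a bounded linear operator from $L^\alpha(\R_+,\mu_r;\R^n)$ to $L^\alpha(0,T;\R^n)$. I do not expect a genuine obstacle here: the only delicate point is the a.e.-finiteness and measurability bookkeeping, which is precisely why I would route the argument through the nonnegative function $h$ rather than manipulate the possibly divergent series $\mathcal{A}(g)$ itself.
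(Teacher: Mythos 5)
Your proof is correct, and the bookkeeping (measurability of the partial sums, a.e.\ finiteness of the majorant $h$, passage to equivalence classes) is handled properly; it differs from the paper's argument mainly in organization rather than in substance. The paper treats two cases: for $\alpha=1$ it combines the Beppo Levi theorem with the translation identity $\sum_k\int_0^Te^{-rs}e^{-rkT}|g(s+kT)|\,ds=\|g\|_{L^1(\R_+,\mu_r)}$ to get a.e.\ absolute convergence, then bounds $e^{-rs}$ below by $e^{-rT}$ on $[0,T]$ to extract the constant; for $\alpha>1$ it applies the discrete H\"older inequality to the series with the unnormalized weights $e^{-rkT}$, obtaining $\bigl|\sum_k e^{-rkT}g(s+kT)\bigr|^\alpha\le(1-e^{-rT})^{-(\alpha-1)}\sum_k e^{-rkT}|g(s+kT)|^\alpha$, and then reduces to the $\alpha=1$ case applied to $|g|^\alpha$. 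Your Jensen inequality with the probability weights $w_k=(1-e^{-rT})e^{-rkT}$ is the normalized form of that same pointwise estimate, and your bound $e^{rt}\le e^{r(k+1)T}$ on $[kT,(k+1)T)$ is the same crude exponential comparison, applied after rather than before the change of variables; the constants come out identical. What your route buys is uniformity: $\alpha=1$ is the trivial case of Jensen, so there is no case split and no two-stage reduction through $|g|^\alpha$, and channelling everything through the nonnegative function $h$ makes the a.e.-convergence and measurability points cleaner. What the paper's route buys is that the $\alpha=1$ statement, once proved, is reused verbatim both to justify a.e.\ convergence for general $\alpha$ and to evaluate the right-hand side of the H\"older estimate.
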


\noindent
\begin{proof} We treat separately the cases $\alpha=1$ and $\alpha>1$.

\medskip
{\bf The Case $\alpha$ = 1.} Let $g \in  {\mathcal L}^{1}(\R_+, \mu_r; \R^n)$. Using the $\sigma$-additivity of the positive measure of density $[t \mapsto e^{-r t}  |g(t)|]$ with respect to the Lebesgue measure $\lambda$ and $\lambda(\{(k+1)T)\})=0$ for any integer $k$, we obtain 
\begin{equation*}
\begin{split}
\sum_{k = 0}^{+ \infty} \int_{[kT, kT + T)}e^{-rt} |g(t)|d\lambda(t) & = \int_{\bigcup_{ k \in \N} [kT, kT + T)}  e^{-rt} |g(t)| \lambda(t)\\
& = \int_{\R_+} e^{-rt} |g(t)| \lambda(t) < +\infty
\end{split}
\end{equation*}
hence

$$\sum_{k = 0}^{+ \infty} \int_{[kT, kT + T]}e^{-rt} |g(t)| d \lambda(t) < + \infty.$$

Doing a change of variable on each term of this sum, we obtain
\begin{equation}\label{eq1}
\sum_{k = 0}^{+ \infty} \int_{[0, T]}e^{-rs} e^{- rkT} |g(s + kT)|d \lambda (s)  = \Vert g  \Vert_{L^{1}(\R_+, \mu_r)} < + \infty,
\end{equation}
and using the linearity of the integral we obtain
\begin{equation}\label{eq2}
\lim_{m \rightarrow + \infty} \int_{[0,T]} e^{-rs} \sum_{k=0}^{+ \infty}  e^{- rkT} | g(s + kT) | d \lambda (s) < + \infty.
\end{equation}
Setting $\phi_m(s) := e^{-rs} \sum_{k=0}^{m}  e^{- rkT} |g(s + kT)|$, we have $\phi_m \in {\mathcal L}^1(0,T; \R_+)$. The sequence $(\phi_m)_{m \in \N}$ is non decreasing, and $\sup_{ m \in \N} \int_{[0,T]} \phi_m(s) d \lambda (s) < + \infty$. Then we can use the B. Levi theorem and assert that 
$\lim_{m \rightarrow + \infty} \phi_m \in {\mathcal L}^1 (0,T; \R_+)$, i.e. $[ s \mapsto  e^{-rs} \sum_{k=0}^{+ \infty}  e^{- rkT} |g(s + kT)| ]  \in {\mathcal L}^1 (0,T; \R_+)$ which implies that this function is $\lambda$-a.e. finite (i.e. the series is convergent in $\R$ for $\lambda$-a.e. $s \in [0,T]$). Since the absolute convergence of series implies the convergence, we can write, for  $\lambda$-a.e. $s \in [0,T]$,
\begin{equation}\label{eq3}
\left|  e^{-rs} \sum_{k=0}^{+ \infty}  e^{- rkT} g(s + kT) \right| \leq  e^{-rs} \sum_{k=0}^{+ \infty}  e^{- rkT} |g(s + kT)|,
\end{equation}
and since $[s \mapsto e^{-rs} \sum_{k=0}^{+ \infty}  e^{- rkT} g(s + kT) ]$ is Lebesgue measurable, it also belongs to $ {\mathcal L}^1 (0,T; \R_+)$. Therefore we obtain that ${\mathcal A}(g) \in {\mathcal L}^1 (0,T; \R_+)$. Moreover from (\ref{eq3}) we obtain 
and since $[s \mapsto e^{-rs} \sum_{k=0}^{+ \infty}  e^{- rkT} g(s + kT) ]$ is Lebesgue measurable, it also belongs to $ {\mathcal L}^1 (0,T; \R_+)$. Therefore we obtain that ${\mathcal A}(g) \in {\mathcal L}^1 (0,T; \R_+)$. Moreover from (\ref{eq3}) we obtain 
\begin{multline*}
 \int_{[0,T]}  e^{-rT} \left|\sum_{k=0}^{+ \infty}  e^{- rkT} g(s + kT)\right| d \lambda (s)   \leq   \int_{[0,T]}  e^{-rs} \left|\sum_{k=0}^{+ \infty}  e^{- rkT} g(s + kT)\right| d \lambda (s) \\
\leq \int_{[0,T]} e^{-rs} \sum_{k=0}^{+ \infty}  e^{- rkT} |g(s + kT)| d \lambda (s)
\end{multline*}
and using (\ref{eq1}), 
\begin{equation}\label{eq4}
\Vert {\mathcal A}(g) \Vert_{L^1(0,T)} \leq \frac{1 - e^{-rT}}{e^{-rT}}  \Vert g  \Vert_{L^{1}(\R_+, \mu_r)}.
\end{equation}

\medskip
{\bf The Case $\alpha > 1$.} Let $g \in  {\mathcal L}^{\alpha}(\R_+, \mu_r; \R^n)$. Since $\mu_r(\R_+) < + \infty$, we have ${\mathcal L}^{\alpha}(\R_+, \mu_r; \R^n) \subset {\mathcal L}^{1}(\R_+, \mu_r; \R^n)$, and so $\sum_{k=0}^{+ \infty}  e^{- rkT}  g(s + kT)$ is well defined in $\R^n$ for $\lambda$-a.e. $s \in [0,T]$.\\
 Since $| g |^{\alpha} \in {\mathcal L}^{1}(\R_+, \mu_r; \R^n)$, we can assert from the case treated above that
\[
\sum_{k=0}^{+ \infty}  e^{- rkT}  |g(s + kT)|^{\alpha}
\]
is well defined in $\R$ for $\lambda$-a.e. $s \in [0,T]$. Now, using the H\"older inequality, with $\beta > 0$ such that $\frac{1}{\alpha} + \frac{1}{\beta} = 1$, we obtain, for $\lambda$-a.e. $s \in [0,T]$, 
\[
\left| \sum_{k=0}^{+ \infty}  e^{- rkT}  g(s + kT) \right| \leq \left( \sum_{k=0}^{+ \infty}  e^{- rkT}  | g(s + kT) |^{\alpha} \right)^{\frac{1}{\alpha}}  \left(\sum_{k=0}^{+ \infty} e^{- rkT} \right)^{\frac{1}{\beta}}
\]
which leads to
\begin{equation}\label{eq5}
\left| \sum_{k=0}^{+ \infty}  e^{- rkT}  g(s + kT) \right|^{\alpha} \leq \left(\frac{1}{1 - e^{-rT}} \right)^{\frac{1}{\alpha -1}} \sum_{k=0}^{+ \infty}  e^{- rkT}  \left| g(s + kT) \right|^{\alpha}.
\end{equation}
Applying the case $\alpha=1$ to $|g|^{\alpha}$ we obtain that $[s \mapsto \sum_{k=0}^{+ \infty}  e^{- rkT}  | g(s + kT)|^{\alpha}] \in {\mathcal L}^1 (0,T; \R_+)$, and consequently from (\ref{eq5}) we obtain that
\[
[s \mapsto \sum_{k=0}^{+ \infty}  e^{- rkT}  g(s + kT)] \in {\mathcal L}^{\alpha} (0,T; \R^n)
\]
which implies that ${\mathcal A}(g) \in {\mathcal L}^{\alpha} (0,T; \R^n)$. Now we prove the announced inequality. First notice that, using the Beppo Levi theorem, we have
$$\sum_{k = 0}^{+ \infty} \int_{[0, T]}e^{-rs} e^{- rkT} | g(s + kT) | d \lambda (s) = \int_{[0, T]}e^{-rs} \left( \sum_{k = 0}^{+ \infty}e^{- rkT} | g(s + kT)| \right)  d \lambda (s),$$
and using (\ref{eq5}) and (\ref{eq1}) we obtain
\[
\begin{split}
& e^{-rT} \!\int_{[0,T]} \frac{1}{(1- e^{-rT} )^{\alpha}} \left| {\mathcal A}(g)(s) \right|^{\alpha} d \lambda(s) = \int_{[0,T]} \!e^{-rT} \left| \sum_{k = 0}^{+ \infty}e^{- rkT} g(s+ kT) \right|^{\alpha} d \lambda (s)\\
\leq & \int_{[0,T]} e^{-rs} \left| \sum_{k = 0}^{+ \infty}e^{- rkT} g(s+ kT) \right|^{\alpha} d \lambda (s)\\
\leq & \left(\frac{1}{1 - e^{-rT}} \right)^{\frac{1}{\alpha -1}} \int_{[0,T]} e^{-rs} \left( \sum_{k=0}^{+ \infty}  e^{- rkT} |g(s + kT)|^{\alpha} \right) d \lambda(s) \\
= & \left(\frac{1}{1 - e^{-rT}} \right)^{\frac{1}{\alpha -1}} \sum_{k=0}^{+ \infty} \int_{[0,T]} e^{-rs}  e^{- rkT} |  g(s + kT) |^{\alpha} d \lambda(s) \\
= &  \left(\frac{1}{1 - e^{-rT}} \right)^{\frac{1}{\alpha -1}} \Vert \; | g |^{\alpha} \; \Vert_{L^1(\R_+, \mu_r)} \\
= &  \left(\frac{1}{1 - e^{-rT}} \right)^{\frac{1}{\alpha -1}} \Vert g \Vert^{\alpha}_{L^{\alpha}(\R_+, \mu_r)}
\end{split}
\]
\vskip2mm
\noindent
which implies
$$\frac{ e^{-rT}}{(1-  e^{-rT})^{\alpha}} \int_{[0,T]} \left| A(g)(s) \right|^{\alpha} d \lambda(s) \leq \frac{1}{(1-  e^{-rT})^{\alpha - 1}} \Vert g \Vert^{\alpha}_{L^{\alpha}(\R_+, \mu_r)}$$
The conclusion immediately follows.
\end{proof}

\subsection{Projection in $L^2$}%

In this section, we first provide an explicit formula of the orthogonal projection of a function $f$ in $L^2(\R_+, \mu_r; \R^n)$ on the Lebesgue space of $T$-periodic functions $\overline{P_T^0}(\R_+, \mu_r; \R^n)$. Secondly we give a rigorous formulation of problem (\ref{Eq:econometrics}) and establish an existence result, giving an explicit formula for the solution.
\begin{theorem}\label{th31}
Let $f \in L^2(\R_+, \mu_r; \R^n)$. The orthogonal projection of $f$ on the subspace $\overline{P_T^0}(\R_+, \mu_r; \R^n)$ is $\left({\mathcal E}_T\circ{\mathcal A}\right)f$.
\end{theorem}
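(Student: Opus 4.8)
The plan is to show that $g := (\mathcal{E}_T \circ \mathcal{A})f$ satisfies the two defining properties of the orthogonal projection onto the closed subspace $V := \overline{P_T^0}(\R_+, \mu_r; \R^n)$: namely that $g \in V$, and that $f - g \perp V$ in $L^2(\R_+, \mu_r; \R^n)$. The first is immediate from the material already developed: by Theorem~\ref{def-averaging}, $\mathcal{A}f \in L^2(0,T;\R^n)$, and by Proposition~\ref{pro25}(ii), $\mathcal{E}_T(\mathcal{A}f) \in \overline{P_T^0}^2(\R_+,\mu_r;\R^n) = V$. So the whole content is in verifying the orthogonality relation, and by linearity and continuity of both $\mathcal{A}$ and $\mathcal{E}_T$ (and density), it suffices to test against $h \in P_T^0(\R_+,\R^n)$, or even against a dense subclass of such $h$.

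The computation I would carry out is the following. For $h \in P_T^0(\R_+,\R^n)$, write the inner product as a sum over periods and fold each period back to $[0,T)$:
\[
\langle f - g, h\rangle_{\mu_r} = \sum_{k=0}^{+\infty} \int_{kT}^{(k+1)T} e^{-rt}\,(f(t) - g(t))\cdot h(t)\,dt = \sum_{k=0}^{+\infty} e^{-rkT}\int_0^T e^{-rs}\,(f(s+kT) - g(s+kT))\cdot h(s+kT)\,ds.
\]
Using that $h$ and $g = \mathcal{E}_T(\mathcal{A}f)$ are $T$-periodic, $h(s+kT) = h(s)$ and $g(s+kT) = \mathcal{A}(f)(s)$, so after justifying the interchange of sum and integral (dominated/monotone convergence, exactly as in the proof of Theorem~\ref{def-averaging}, using $f \in L^1(\R_+,\mu_r)$) this becomes
\[
\int_0^T e^{-rs}\Big(\sum_{k=0}^{+\infty} e^{-rkT} f(s+kT) - \big(\textstyle\sum_{k=0}^{+\infty} e^{-rkT}\big)\mathcal{A}(f)(s)\Big)\cdot h(s)\,ds = \int_0^T e^{-rs}\Big(\tfrac{1}{1-e^{-rT}}\mathcal{A}(f)(s) - \tfrac{1}{1-e^{-rT}}\mathcal{A}(f)(s)\Big)\cdot h(s)\,ds = 0,
\]
since $\sum_{k\geq 0} e^{-rkT} = \frac{1}{1-e^{-rT}}$ and, by definition, $\mathcal{A}(f)(s) = (1-e^{-rT})\sum_{k\geq 0} e^{-rkT} f(s+kT)$. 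Hence $f - g \perp P_T^0(\R_+,\R^n)$, and by continuity of the inner product $f - g$ is orthogonal to the whole closure $V$.

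Combining the two facts, $g \in V$ and $f - g \in V^\perp$, the uniqueness of the orthogonal decomposition in the Hilbert space $L^2(\R_+,\mu_r;\R^n)$ gives that $g$ is precisely the orthogonal projection of $f$ onto $V$, which is the claim. The main technical obstacle is the justification of the termwise integration in the displayed computation: one must be sure the double sum/integral is absolutely convergent before folding periods and pulling the periodic factor $h(s)$ out of the sum. This is handled exactly as in Theorem~\ref{def-averaging} — the Cauchy–Schwarz inequality on $[0,T]$ bounds $|f(s+kT)\cdot h(s)|$ and the $e^{-rkT}$ weights are summable — so it is routine, but it is the one place where care is genuinely needed rather than a purely formal manipulation.
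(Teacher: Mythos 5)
Your proposal is correct and follows essentially the same route as the paper: both verify the Hilbert-space characterization of the projection by folding the inner product over the periods $[kT,(k+1)T)$, using the $T$-periodicity of the test function and of $\mathcal{E}_T(\mathcal{A}f)$ to reduce everything to $\frac{1}{1-e^{-rT}}\int_0^T e^{-rs}\mathcal{A}f(s)\cdot q(s)\,ds$. The only difference is organizational (you compute $\langle f-g,q\rangle$ directly, while the paper computes $\langle f,q\rangle$ and $\langle g,q\rangle$ separately and matches them), and your handling of the sum--integral interchange is adequate.
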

\begin{proof} Using Proposition \ref{pro25}, we see that $\left({\mathcal E}_T\circ{\mathcal A}\right)f \in \overline{P_T^0}(\R_+,\mu_r; \R^n)$, which is a closed vector subspace of the Hilbertian space $L^2(\R_+, \mu_r; \R^n)$. It follows that the orthogonal projection of $f$ on $\overline{P_T^0}(\R_+,\mu_r; \R^n)$ exists and is unique. Let us denote by $p$ this orthogonal projection. It is characterized by the following property
$$\forall q \in \overline{P_T^0}(\R_+,\mu_r; \R^n), (f-p \mid q )_{L^2(\R_+,\mu_r; \R^n)} = 0,$$
that is
\begin{equation}\label{eq31}
\forall q \in \overline{P_T^0}(\R_+,\mu_r; \R^n), (f \mid q)_{L^2(\R_+,\mu_r; \R^n)} = (p \mid q)_{L^2(\R_+,\mu_r; \R^n)}.
\end{equation}
Let $q \in \overline{P_T^0}(\R_+,\mu_r; \R^n)$, then arguing as in Proposition \ref{pro26} and using the periodicity of $q$ we obtain 
$$(f \mid q)_{L^2(\R_+,\mu_r; \R^n)} = \int_0^T e^{-rs}( \sum_{k=0}^{+\infty}e^{-rkT} f(s+kT))\cdot q(s) ds.$$
This last equality implies that, for any $q \in \overline{P_T^0}(\R_+,\mu_r; \R^n)$,
\begin{equation}\label{eq32}
(f \mid q)_{L^2(\R_+,\mu_r; \R^n)} = \frac{1}{1-e^{-rT}} \int_0^T e^{-rs} ({\mathcal A}f)(s)\cdot q(s) ds.
\end{equation}
Replacing $f$ by an arbitrary $g \in \overline{P_T^0}(\R_+,\mu_r; \R^n)$ in the previous computation, we obtain
$$(g \mid q)_{L^2(\R_+,\mu_r; \R^n)}  =   \int_0^T e^{-rs} \frac{1}{1-e^{-rT}} g(s)\cdot q(s) ds,$$
that implies, for any $g$ and $q\in\overline{P_T^0}(\R_+,\mu_r; \R^n)$,
\begin{equation}\label{eq33}
(g \mid q)_{L^2(\R_+,\mu_r; \R^n)} = \frac{1}{1-e^{-rT}} \int_0^T e^{-rs} g(s)\cdot q(s) ds.
\end{equation}
Taking $g = \left({\mathcal E}_T\circ{\mathcal A}\right)f$ in (\ref{eq33}) leads, for any  to $q \in \overline{P_T^0}(\R_+,\mu_r; \R^n)$,
$$(\left({\mathcal E}_T\circ{\mathcal A}\right)f \mid q)_{L^2(\R_+,\mu_r; \R^n)} = \frac{1}{1-e^{-rT}} \int_0^T e^{-rs} M_f(s)\cdot q(s) ds.$$
Finally, we get from \eqref{eq32} that, for all  $q \in \overline{P_T^0}(\R_+,\mu_r; \R^n)$,
\[
(f \mid q)_{L^2(\R_+,\mu_r; \R^n)} = (\left({\mathcal E}_T\circ{\mathcal A}\right)f \mid q)_{L^2(\R_+,\mu_r; \R^n)},
\]
and conclude using (\ref{eq31}).
\end{proof}
\begin{corollary}\label{cor32}
When $ f \in L^2(\R_+, \mu_r; \R^n)$ the two following assertions are equivalent.
\begin{enumerate}
\item[(i)] $f$ is orthogonal to  $\overline{P_T^0}(\R_+,\mu_r; \R^n)$ in $L^2(\R_+, \mu_r; \R^n)$.
\item[(ii)] For a.e. $s \in [0,T)$, $\sum_{k=0}^{+\infty} e^{-rkT} f(s + kT) = 0$.
\end{enumerate}
\end{corollary}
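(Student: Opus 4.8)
The plan is to deduce the equivalence directly from Theorem \ref{th31} together with the formula \eqref{eq32} established in its proof. The key observation is that $f$ being orthogonal to $\overline{P_T^0}(\R_+,\mu_r;\R^n)$ is exactly the statement that the orthogonal projection of $f$ on that subspace is zero, which by Theorem \ref{th31} means $({\mathcal E}_T\circ{\mathcal A})f = 0$ in $L^2(\R_+,\mu_r;\R^n)$. By the norm identity \eqref{homot} of Proposition \ref{pro25}(ii) (applied with $\alpha=2$), $\|({\mathcal E}_T\circ{\mathcal A})f\|_{L^2(\R_+,\mu_r;\R^n)} = (1-e^{-rT})^{-1/2}\|{\mathcal A}f\|_{L^2([0,T);\R^n)}$, so $({\mathcal E}_T\circ{\mathcal A})f = 0$ is equivalent to ${\mathcal A}f = 0$ a.e.\ on $[0,T)$. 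Recalling the definition ${\mathcal A}(f)(s) = (1-e^{-rT})\sum_{k=0}^{+\infty}e^{-rkT}f(s+kT)$ and that $1-e^{-rT}\neq 0$, this is precisely assertion (ii).

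In more detail, I would argue as follows. First, (i) $\iff$ the projection $p$ of $f$ vanishes: indeed, by the characterization \eqref{eq31}, $p$ is the unique element of the subspace such that $f-p\perp \overline{P_T^0}(\R_+,\mu_r;\R^n)$; if $f\perp\overline{P_T^0}(\R_+,\mu_r;\R^n)$ then $p=0$ works and hence is the projection, and conversely if $p=0$ then $f=f-p\perp\overline{P_T^0}(\R_+,\mu_r;\R^n)$. Next, by Theorem \ref{th31}, $p = ({\mathcal E}_T\circ{\mathcal A})f$, so (i) $\iff ({\mathcal E}_T\circ{\mathcal A})f=0$. Then apply \eqref{homot} to turn this into ${\mathcal A}f = 0$ in $L^2([0,T);\R^n)$, i.e.\ ${\mathcal A}(f)(s)=0$ for a.e.\ $s\in[0,T)$. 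Finally, divide by the nonzero constant $1-e^{-rT}$ to obtain $\sum_{k=0}^{+\infty}e^{-rkT}f(s+kT)=0$ for a.e.\ $s\in[0,T)$, which is (ii); all the steps are reversible, giving the equivalence.

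I do not anticipate a genuine obstacle here, since the corollary is essentially a restatement of Theorem \ref{th31} in the special case of the zero projection. The only minor point to be careful about is the well-definedness of the series $\sum_{k=0}^{+\infty}e^{-rkT}f(s+kT)$ for a.e.\ $s$: this is guaranteed by Theorem \ref{def-averaging} (with $\alpha=2$), which asserts that for $f\in L^2(\R_+,\mu_r;\R^n)$ the operator ${\mathcal A}$ is well defined and yields an element of $L^2(0,T;\R^n)$, so the series converges in $\R^n$ for $\lambda$-a.e.\ $s\in[0,T)$. With that in hand, the chain of equivalences above is routine.
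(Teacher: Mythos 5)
Your proposal is correct and follows essentially the same route as the paper: identify (i) with the vanishing of the projection, use Theorem \ref{th31} to write the projection as $({\mathcal E}_T\circ{\mathcal A})f$, and reduce to ${\mathcal A}f=0$ a.e.\ on $[0,T)$, i.e.\ assertion (ii). Your extra remarks (the norm identity \eqref{homot} and the a.e.\ convergence of the series via Theorem \ref{def-averaging}) just make explicit steps the paper leaves implicit.
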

\vskip4mm

\begin{proof}
 (i) is equivalent to $\left({\mathcal E}_T\circ{\mathcal A}\right)f = 0$. By definition, it is equivalent to ${\mathcal A}f = 0$ on $[0,T)$ hence, by Theorem \ref{th31}, to assertion (ii).
\end{proof}
\begin{remark}\label{rem33}
If $f$ is orthogonal to $\overline{P_T^0}(\R_+,\mu_r; \R^n)$ in $L^2(\R_+, \mu_r; \R^n)$ and if $f \geq 0$ on $\R_+$, then using Corollary \ref{cor32} we have $f(s+kT) = 0$ for all $k \in \N$ and $\lambda$-a.e. $s \in [0,T)$ that implies that $f=0$ $\mu$-a.e. on $\R_+$.
\end{remark}
\vskip4mm
Now we denote by ${\mathfrak L}(\R_+, \R^n)$ the space of functions from $\R_+$ into $\R^n$ of the form $\underline{a} := [t \mapsto t a]$ where $a \in \R^n$. ${\mathfrak L}(\R_+, \R^n)$ is a vector subspace of $L^2(\R_+, \mu_r; \R^n)$ which is isomorphic to $\R^n$, and so it has finite dimension.
\vskip3mm
\begin{lemma}\label{lem34}
The direct sum $\overline{P_T^0}(\R_+, \mu_r; \R^n) \oplus {\mathfrak L}(\R_+, \R^n)$ is a closed vector subspace of $L^2(\R_+, \mu_r; \R^n)$.
\end{lemma}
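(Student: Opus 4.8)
The statement bundles two claims: that the sum $\overline{P_T^0}(\R_+, \mu_r; \R^n) + {\mathfrak L}(\R_+, \R^n)$ is direct, and that it is closed. The plan is to dispatch directness by a one-line periodicity argument and then obtain closedness from the general principle that the sum of a closed subspace and a finite-dimensional subspace of a Banach space is closed.

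First I would check directness, i.e.\ $\overline{P_T^0}(\R_+,\mu_r;\R^n)\cap{\mathfrak L}(\R_+,\R^n)=\{0\}$. If $u$ lies in this intersection, then $u=\underline{a}$ for some $a\in\R^n$, and by Proposition~\ref{pro25}(i) one has $u(t+T)=u(t)$ for a.e.\ $t$; writing this out gives $(t+T)a=ta$ for a.e.\ $t$, hence $Ta=0$ and $a=0$, so $u=0$. For closedness, the cleanest route is to pass to the quotient. Set $F:=\overline{P_T^0}(\R_+,\mu_r;\R^n)$ and $E:={\mathfrak L}(\R_+,\R^n)$; by hypothesis $F$ is a closed subspace of the Banach space $H:=L^2(\R_+,\mu_r;\R^n)$, so the quotient $H/F$ is a Banach space and the canonical surjection $\pi:H\to H/F$ is continuous. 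The image $\pi(E)$ is a linear subspace of $H/F$ of dimension at most $n$, since $E\cong\R^n$ is finite-dimensional; hence $\pi(E)$ is closed in $H/F$. A short computation identifies $F+E=\pi^{-1}(\pi(E))$: an element $x$ belongs to the right-hand side iff $\pi(x)=\pi(e)$ for some $e\in E$, iff $x-e\in F$ for some $e\in E$, iff $x\in F+E$. Being the preimage under the continuous map $\pi$ of the closed set $\pi(E)$, the space $F+E$ is closed. Alternatively one can stay inside the Hilbert space $H$: letting $P$ denote the orthogonal projection onto $F^{\perp}$, one checks $F+E=F\oplus P(E)$ with the two summands mutually orthogonal and $P(E)$ finite-dimensional, and an orthogonal sum of two closed subspaces is closed because, by the Pythagorean identity, a Cauchy sequence in the sum induces Cauchy sequences in each summand.

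I do not expect a genuine obstacle here. The two nontrivial inputs are the classical facts that a finite-dimensional subspace of a normed space is closed and that the quotient of a Banach space by a closed subspace is again Banach (with the quotient map continuous). The only point requiring a line of care is the set-theoretic identity $F+E=\pi^{-1}(\pi(E))$, or, in the Hilbert-space variant, the orthogonal decomposition $F+E=F\oplus P(E)$; everything else is bookkeeping.
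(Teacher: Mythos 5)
Your proof is correct, but it is more self-contained than the paper's, which disposes of the lemma in one line by invoking the Corollaire on p.~229 of Schwartz \cite{TGAF} — precisely the general principle that, in a topological vector space, the sum of a closed subspace and a finite-dimensional subspace is closed. What you do differently is to actually prove that principle: your quotient argument ($F+E=\pi^{-1}(\pi(E))$, with $\pi(E)$ finite-dimensional hence closed in the Banach space $H/F$, and $\pi$ continuous) is the standard proof of the cited corollary, and your Hilbert-space variant via the orthogonal projection onto $F^{\perp}$ is an equally valid alternative that exploits the $L^2$ structure. You also verify explicitly that the sum is direct, i.e.\ $\overline{P_T^0}(\R_+,\mu_r;\R^n)\cap{\mathfrak L}(\R_+,\R^n)=\{0\}$, using the a.e.-periodicity characterization of Proposition~\ref{pro25}(i); the paper writes $\oplus$ without comment, so this small check is a welcome addition (and is needed later when the projection onto the sum is decomposed into its two components in Theorem~\ref{th35}). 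In short: same underlying key fact, with the paper outsourcing it to a reference and you supplying an elementary proof; nothing in your argument fails.
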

\vskip3mm
\begin{proof}
It is an immediate consequence of Corollaire, p. 229ç in \cite{TGAF}\nc.
\end{proof} 
\vskip4mm
Let us now give a rigorous formulation of problem \eqref{Eq:econometrics}. For any fixed function $x \in L^2(\R_+, \mu_r; \R^n)$, we consider the following minimization problemù.
\vskip2mm
\begin{equation}\label{eq34}
\left.
\begin{array}{rl}
\displaystyle{\rm Minimize}& E(p, \underline{a}) := \int_0^{+ \infty} e^{-rt} \vert x(t) - p(t) - t a \vert^2 dt\\
{\rm when} & p \in \overline{P_T^0}(\R_+, \mu_r; \R^n), a\in\R^n.
\end{array}
\right\}
\end{equation}
\vskip4mm
\begin{theorem}\label{th35}
For any $x \in L^2(\R_+, \mu_r; \R^n)$, the problem \eqref{eq34} admits an unique solution
\[
(\hat{p}, a) \in \overline{P_T^0}(\R_+, \mu_r; \R^n) \times \R^n
\] 
In addition, it is given by
\begin{eqnarray}
\label{Eq:a_opt}\hat{a} & = & \frac{r}{T} \int_0^T e^{-rs}(\tilde{\mathcal A}x(s) - {\mathcal A}x(s))ds,\\
\label{Eq:p_opt}\hat{p} & = & \left({\mathcal E}_T\circ{\mathcal A}\right)x - \left({\mathcal E}_T\circ{\mathcal A}\right)\hat{\underline{a}},
\end{eqnarray}
where
$$\tilde{\mathcal A}x(s) := \frac{(1-e^{-rT})^2}{ e^{-rT}} \sum_{k=0}^{+ \infty}ke^{-rkT}  x(s + kT)$$
and
\[
\underline{\hat a}(t)=\hat  at.
\]
\end{theorem}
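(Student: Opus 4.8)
The plan is to read \eqref{eq34} as a least-squares (orthogonal projection) problem in the Hilbert space $L^2(\R_+,\mu_r;\R^n)$ and to extract \eqref{Eq:a_opt}--\eqref{Eq:p_opt} from the associated normal equations. Minimizing $E(p,\underline a)$ over $(p,a)\in\overline{P_T^0}(\R_+,\mu_r;\R^n)\times\R^n$ is the same as minimizing $\|x-u\|_{L^2(\R_+,\mu_r;\R^n)}^2$ over $u$ in the subspace $V:=\overline{P_T^0}(\R_+,\mu_r;\R^n)\oplus{\mathfrak L}(\R_+,\R^n)$, which is closed by Lemma \ref{lem34}. Hence the minimizer $\hat u$ exists and is unique (the orthogonal projection of $x$ onto $V$), and since the sum is direct with ${\mathfrak L}(\R_+,\R^n)\cong\R^n$, the decomposition $\hat u=\hat p+\underline{\hat a}$ yields the unique pair $(\hat p,\hat a)$. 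Moreover $(\hat p,\hat a)$ solves \eqref{eq34} if and only if $x-\hat p-\underline{\hat a}$ is orthogonal to $V$, equivalently orthogonal to $\overline{P_T^0}(\R_+,\mu_r;\R^n)$ \emph{and} to ${\mathfrak L}(\R_+,\R^n)$. Note that $\underline{\hat a}\in L^2(\R_+,\mu_r;\R^n)$ since $\int_0^{+\infty}t^2e^{-rt}\,dt<+\infty$, so that ${\mathcal A}\underline{\hat a}$ is meaningful by Theorem \ref{def-averaging}.

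\emph{The periodic component.} Orthogonality of $x-\hat p-\underline{\hat a}$ to $\overline{P_T^0}(\R_+,\mu_r;\R^n)$ means, by Theorem \ref{th31}, precisely that $\hat p$ is the orthogonal projection of $x-\underline{\hat a}$ onto $\overline{P_T^0}(\R_+,\mu_r;\R^n)$, i.e. $\hat p=({\mathcal E}_T\circ{\mathcal A})(x-\underline{\hat a})=({\mathcal E}_T\circ{\mathcal A})x-({\mathcal E}_T\circ{\mathcal A})\underline{\hat a}$, which is \eqref{Eq:p_opt}. Conversely, because ${\mathcal A}\circ{\mathcal E}_T$ is the identity on functions defined on $[0,T)$ and ${\mathcal A}$ is linear, \eqref{Eq:p_opt} forces ${\mathcal A}(x-\hat p-\underline{\hat a})=0$ on $[0,T)$, i.e. orthogonality to $\overline{P_T^0}(\R_+,\mu_r;\R^n)$ by Corollary \ref{cor32}, for \emph{any} value of $\hat a$. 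Thus $\hat a$ is determined solely by the remaining condition, orthogonality to ${\mathfrak L}(\R_+,\R^n)$, namely $\int_0^{+\infty}t\,e^{-rt}\,(x-\hat p-\underline{\hat a})(t)\,dt=0$ in $\R^n$.

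\emph{The linear component.} Write $P:={\mathcal E}_T\circ{\mathcal A}$ and let $I$ denote the identity, so $x-\hat p-\underline{\hat a}=(I-P)x-(I-P)\underline{\hat a}$. A direct computation with the geometric series gives, for $b\in\R^n$, ${\mathcal A}\underline b=[s\mapsto b(s+c)]$ on $[0,T)$ with $c:=Te^{-rT}/(1-e^{-rT})$, hence $(I-P)\underline b$ is the step function equal to $b(kT-c)$ on each interval $[kT,(k+1)T)$. One then splits $\int_0^{+\infty}t\,e^{-rt}\,(\cdot)\,dt$ over these intervals, substitutes $t=s+kT$, and interchanges sum and integral (legitimate since $x\in L^2(\R_+,\mu_r;\R^n)\subset L^1(\R_+,\mu_r;\R^n)$, $[t\mapsto te^{-rt}]$ is bounded and integrable, and one may invoke Tonelli together with Theorem \ref{def-averaging}). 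The $x$-contribution collapses: the $\sum_k e^{-rkT}$ part cancels because $\sum_k e^{-rkT}(x(s+kT)-{\mathcal A}x(s))=0$, and the $\sum_k ke^{-rkT}x(s+kT)$ part is recognized as $\frac{e^{-rT}}{(1-e^{-rT})^2}\tilde{\mathcal A}x(s)$, leaving $\int_0^{+\infty}t\,e^{-rt}(I-P)x(t)\,dt=\frac{Te^{-rT}}{(1-e^{-rT})^2}\int_0^T e^{-rs}(\tilde{\mathcal A}x(s)-{\mathcal A}x(s))\,ds$. For the $\underline{\hat a}$-contribution one uses $\sum_k(kT-c)e^{-rkT}=0$ and $\sum_k k(kT-c)e^{-rkT}=Te^{-rT}/(1-e^{-rT})^3$ to get $\int_0^{+\infty}t\,e^{-rt}(I-P)\underline{\hat a}(t)\,dt=\hat a\cdot T^2e^{-rT}/\big(r(1-e^{-rT})^2\big)$. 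Equating the two expressions and simplifying gives $\hat a=\frac{r}{T}\int_0^T e^{-rs}(\tilde{\mathcal A}x(s)-{\mathcal A}x(s))\,ds$, i.e. \eqref{Eq:a_opt}.

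I expect this last step to be the only real obstacle: it requires careful bookkeeping of the elementary sums $\sum_{k\geq 0}k^{j}e^{-rkT}$ for $j=0,1,2$, the identification of $(I-P)\underline b$ as a piecewise-constant function, and the Tonelli/Fubini interchanges — but no genuinely new idea beyond the tools already established (Theorem \ref{def-averaging}, Theorem \ref{th31}, Corollary \ref{cor32}, Lemma \ref{lem34}).
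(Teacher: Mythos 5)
Your proposal is correct, and its skeleton coincides with the paper's: existence and uniqueness via the orthogonal projection onto the closed subspace $\overline{P_T^0}(\R_+,\mu_r;\R^n)\oplus{\mathfrak L}(\R_+,\R^n)$ given by Lemma \ref{lem34}, and the identity \eqref{Eq:p_opt} via Theorem \ref{th31} applied to $x-\underline{\hat a}$. Where you diverge is in how $\hat a$ is pinned down: the paper substitutes the explicit projection $p_{\underline a}$ into $E$, regards $F(a)=E(p_{\underline a},a)$ as a quadratic function of $a$, writes the critical-point equation and then declares the remaining series computation ``lengthy but straightforward''; you instead write the full normal equations (residual orthogonal to each summand), observe that $(I-P)\underline b$ is the step function $b(kT-c)$ on $[kT,(k+1)T)$ with $c=Te^{-rT}/(1-e^{-rT})$, and test the residual directly against $\underline b$. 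The two routes produce the same linear equation for $\hat a$ (the paper's weight $\frac{Te^{-rT}}{1-e^{-rT}}-kT$ is exactly your $-(kT-c)$), but your organization buys an explicit and verifiable derivation of the constant in \eqref{Eq:a_opt}: your sums $\sum_k(kT-c)e^{-rkT}=0$, $\sum_k k(kT-c)e^{-rkT}=Te^{-rT}/(1-e^{-rT})^3$, and the resulting identities
$\int_0^{+\infty}te^{-rt}(I-P)x\,dt=\frac{Te^{-rT}}{(1-e^{-rT})^2}\int_0^Te^{-rs}(\tilde{\mathcal A}x-{\mathcal A}x)\,ds$ and $\int_0^{+\infty}te^{-rt}(I-P)\underline{\hat a}\,dt=\hat a\,\frac{T^2e^{-rT}}{r(1-e^{-rT})^2}$ all check out, so you have in effect supplied the computation the paper leaves implicit. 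One small point of hygiene: the integrability needed for the Tonelli interchange is better justified by Cauchy--Schwarz in $L^2(\R_+,\mu_r;\R^n)$ (both the residual and $\underline b$ lie in that space, since $\int_0^{+\infty}t^2e^{-rt}dt<+\infty$), rather than by ``$te^{-rt}$ bounded and $x\in L^1(\R_+,\mu_r;\R^n)$'', which by itself does not control $\int_0^{+\infty}te^{-rt}|x(t)|\,dt$; this is a phrasing issue, not a gap.
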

\vskip5mm
\begin{proof}
Since, by Lemma \ref{lem34}, $\overline{P_T^0}(\R_+, \mu_r; \R^n) \oplus {\mathfrak L}(\R_+, \R^n)$ is a closed vector subspace of the Hilbert $L^2(\R_+, \mu_r; \R^n)$, the existence and the uniqueness of a solution $(\hat p,\hat a)$ of problem (\ref{eq34}) is simply due to the theorem of the orthogonal projection on a closed vector subspace in a Hilbert space. 

Let us first write
$$
\begin{array}{rcl}
E(\hat{p}, \hat{{a}}) &=& \inf \left\{ E(p, a) : p \in \overline{P_T^0}(\R_+, \mu_r; \R^n), a \in \R^n\right\}\\
 &=& \inf_{a \in \R^n}\left( \inf_{p \in \overline{P_T^0}(\R_+, \mu_r; \R^n)}E(p, a)\right).
\end{array}
$$
If $p_{\underline{a}}$ denotes the orthogonal projection of $x-\underline{a}$ on $\overline{P_T^0}(\R_+, \mu_r; \R^n)$, then, for all $a \in \R^n$, we have $\inf_{p \in \overline{P_T^0}(\R_+, \mu_r; \R^n)}E(p,a) = E(p_{\underline{a}}, a)$, and consequently
\begin{equation}\label{eq35}
E(\hat{p}, \hat{\underline{a}}) = \inf_{\underline{a} \in {\mathcal L}(\R_+, \R^n)}E(p_{\underline{a}}, \underline{a}).
\end{equation}
Since $E(p_{\hat{\underline{a}}}, \hat{{a}}) = \inf_{p \in \overline{P_T^0}(\R_+, \mu_r; \R^n)}E(p, \hat{{a}}) \leq E(\hat{p}, \hat{{a}})$, and since $(p_{\hat{\underline{a}}}, \hat{{a}})$ is optimal we have $E(\hat{p}, \hat{{a}}) \leq E(p_{\hat{\underline{a}}}, \hat{{a}})$ that implies $E(p_{\hat{\underline{a}}}, \hat{{a}}) = E(\hat{p}, \hat{{a}})$. Using the uniqueness of the optimal solution we get
\begin{equation}\label{eq36}
\hat{p} = p_{\hat{\underline{a}}}.
\end{equation}
Theorem~\ref{th31} applied to $(x-{\hat{\underline{a}}})$ then leads to 
\[
p_{\hat{\underline{a}}} = \left({\mathcal E}_T\circ{\mathcal A}\right)(x-{\hat{\underline{a}}}) = \left({\mathcal E}_T\circ{\mathcal A}\right)x - \left({\mathcal E}_T\circ{\mathcal A}\right){\hat{\underline{a}}}.
\]
This proves \eqref{Eq:p_opt}. Next, let us write, for any $a\in\R^n$, for any $s\in[0,T)$
\begin{eqnarray}\label{Eq:projx-a}
p_{{\underline{a}}}(s) &= & {\mathcal A}x(s) - (1-e^{-rT})\sum_{k=0}^{+ \infty} e^{-rkT} (s + kt) {a}\nonumber\\
 &=& {\mathcal A}x(s) - (1-e^{-rT})\sum_{k=0}^{+ \infty} e^{-rkT} s{a} - (1-e^{-rT})\sum_{k=0}^{+ \infty} e^{-rkT}(kT) {a}\nonumber\\
 &=& {\mathcal A}x(s) - s{a} -(1-e^{-rT}) \frac{T e^{-rT}}{(1-e^{-rT})^2} {a},\nonumber\\
 &=& {\mathcal A}x(s) - as - \frac{aT e^{-rT}}{(1-e^{-rT})}.
\end{eqnarray}
It remains to prove the formula for $\hat{a}$. Let us introduce the function $F : \R^n \rightarrow \R$ defined by $F(a) := E(p_{\underline{a}}, a)$. Using (\ref{eq35}) and (\ref{eq36}), we see that
\[
F(\hat{a}) = \inf_{a \in \R^n}F(a).
\]
We use here again that $p_{{\underline{a}}}=\left({\mathcal E}_T\circ{\mathcal A}\right)x - \left({\mathcal E}_T\circ{\mathcal A}\right){{\underline{a}}}$ and \eqref{Eq:projx-a} to write
\begin{multline}\label{eq38}
F(a) = \int_0^{+\infty}e^{-rt} \vert x(t) - \left({\mathcal E}_T\circ{\mathcal A}\right)x(s) + \left({\mathcal E}_T\circ{\mathcal A}\right){\underline{a}}(t) - ta \vert^2 dt\\
 = \sum_{k=0}^{+ \infty}\int_0^T e^{-rs} e^{-rkT} \vert x(s+kT)- {\mathcal A}x(s) + \frac{T e^{-rT}}{(1-e^{-rT})} a - kT a \vert^2 ds.
\end{multline}
The function $F$ is quadratic so its minimizer $\hat{a}$ can be characterized as a critical point, that is
\[
\sum_{k=0}^{+ \infty}\int_0^T e^{-rs} e^{-rkT}( \frac{T e^{-rT}}{(1-e^{-rT})} - kT) ( x(s+kT) - {\mathcal A}x(s) + \frac{T e^{-rT}}{(1-e^{-rT})} \hat{a} - kT \hat{a})ds = 0,
\]
which is equivalent to
\begin{multline*}
\int_0^T e^{-rs} \sum_{k=0}^{+ \infty}e^{-rkT}( \frac{T e^{-rT}}{(1-e^{-rT})} - kT)({\mathcal A}x(s) - x(s+kT))ds\\
= \left(\int_0^T e^{-rs} \sum_{k=0}^{+ \infty}e^{-rkT}( \frac{T e^{-rT}}{(1-e^{-rT})} - kT)^2 ds\right)\hat{a}.
\end{multline*}
A quite lengthy but straightforward computation using standard series finally leads to the expression \eqref{Eq:a_opt}.
\end{proof}
\section{Existence results for Problem \eqref{Eq:Model}}
We start by establishing some properties on the operator ${\mathcal A}_1$, defined for any function $L:\R_+\times\R^n\times\R^n$ by
\[
\forall (s,x,y)\in[0,T]\times\R^n\times\R^n, \ {\mathcal A}_1(L)(s,x,y)={\mathcal A}\left(L(\cdot,x,y)\right)(s).
\]
\begin{lemma}\label{lem41} Let $L : \R_+ \times \R^n \times \R^n \rightarrow \R_+$.
\begin{enumerate}
\item[($\alpha$)] If, for a.e. $t \in \R_+$, $L(t,.,.)$ is lower semi-continuous on $\R^n \times \R^n$, then, for a.e. $s \in [0,T]$, ${\mathcal A}_1(L)(s,.,.)$ is 
 lower semi-continuous on $\R^n \times \R^n$.
\item[($\beta$)] If, for a.e. $t \in \R_+$, $L(t,.,.)$ is upper semi-continuous on $\R^n \times \R^n$, then, for a.e. $s \in [0,T]$, ${\mathcal A}_1(L)(s,.,.)$ is 
 upper semi-continuous on $\R^n \times \R^n$.
\item[($\gamma$)] If, for a.e. $t \in \R_+$, $L(t,.,.)$ is continuous on $\R^n \times \R^n$, then, for a.e. $s \in [0,T]$, ${\mathcal A}_1(L)(s,.,.)$ is 
continuous on $\R^n \times \R^n$.
\item[($\delta$)] If,  for a.e. $t \in \R_+$ and for all $x \in \R^n$, the function $L(t,x,.)$ is convex, then, for a.e. $s \in [0,T]$, and for all $x \in \R^n$, the function ${\mathcal A}_1(L)(s,x,.)$ is convex.
\item[($\epsilon$)] let $\rho : \R^n \rightarrow \R_+$ such that, for all $(t,x,y) \in \R_+ \times \R^n \times \R^n$, $L(t,x,y) \geq \rho(y)$, then we have, for all $(s,x,y) \in [0,T] \times \R^n \times \R^n$, ${\mathcal A}_1(L)(s,x,y) \geq \rho(y)$.
\item[($\zeta$)] If $L$ is measurable from $(\R_+ \times \R^n \times \R^n, \overline{{\mathfrak B}}(\R_+) \otimes  \overline{{\mathfrak B}}(\R^n) \otimes   \overline{{\mathfrak B}}(\R^n) )$ into $(\R_+,  \overline{{\mathfrak B}}(\R_+)$, then ${\mathcal A}_1(L)$ is measurable from $([0,T] \times \R^n \times \R^n, \overline{{\mathfrak B}}([0,T]) \otimes  \overline{{\mathfrak B}}(\R^n) \otimes   \overline{{\mathfrak B}}(\R^n) )$ into $([0, + \infty],  \overline{{\mathfrak B}}([0,T]))$.
\item[($\eta$)] If $L$ is a Caratheodory function then ${\mathcal A}_1(L)$ is a Caratheodory function.
\end{enumerate}
\end{lemma}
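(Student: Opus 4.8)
The plan is to treat the seven assertions in the natural order, noting that several follow from the same underlying principle: the operator $\mathcal A$ is, up to the fixed normalizing constant $1-e^{-rT}$, a countable sum with positive weights $e^{-rkT}$ of the translates $L(\cdot+kT,x,y)$, and all the properties in play (lower/upper semicontinuity, continuity, convexity in $y$, lower bounds, measurability) are stable under translation, under multiplication by a positive constant, and under locally uniformly convergent series of nonnegative terms. Throughout, whenever an assertion reads ``for a.e. $t\in\R_+$'', I would fix a $\lambda$-null set $N\subset\R_+$ outside of which the hypothesis on $L(t,\cdot,\cdot)$ holds, and then observe that the set $\widetilde N:=\{s\in[0,T]:\ s+kT\in N\text{ for some }k\in\N\}=\bigcup_{k\in\N}((N\cap[kT,(k+1)T))-kT)$ is still $\lambda$-null in $[0,T]$; for $s\notin\widetilde N$ every translate $L(s+kT,\cdot,\cdot)$ has the desired property, and $\mathcal A_1(L)(s,\cdot,\cdot)$ is the pointwise limit of the increasing sequence of partial sums $S_m(s,x,y):=(1-e^{-rT})\sum_{k=0}^m e^{-rkT}L(s+kT,x,y)$, which may a priori take the value $+\infty$.

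For $(\alpha)$ I would use that each partial sum $S_m(s,\cdot,\cdot)$ is a finite nonnegative combination of lower semicontinuous functions, hence lower semicontinuous, and that the supremum of an increasing sequence of lower semicontinuous functions is lower semicontinuous; since $\mathcal A_1(L)(s,\cdot,\cdot)=\sup_m S_m(s,\cdot,\cdot)$ (valued in $[0,+\infty]$), this gives $(\alpha)$. For $(\beta)$ the sup argument is not available, so instead I would invoke the integrability half of Theorem~\ref{def-averaging} (applied with $\alpha=1$ after freezing $(x,y)$) to know the series converges for a.e.\ $s$, then use the tail estimate: fix $s$ where the series $\sum_k e^{-rkT}L(s+kT,x_0,y_0)$ converges and apply it near a fixed point $(x_0,y_0)$; upper semicontinuity of each translate plus a Weierstrass-type domination on a compact neighbourhood (using that $L\geq 0$ lets us bound the tail uniformly) shows $\mathcal A_1(L)(s,\cdot,\cdot)$ is a locally uniform limit of the u.s.c.\ partial sums $S_m$, hence u.s.c.; part $(\gamma)$ is then immediate from $(\alpha)$ and $(\beta)$. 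Assertion $(\delta)$ is the observation that $y\mapsto L(s+kT,x,y)$ convex for each $k$ makes each $S_m(s,x,\cdot)$ convex, and a pointwise $\sup$ (equivalently, increasing limit) of convex functions is convex. Assertion $(\epsilon)$ is a direct computation: $\mathcal A_1(L)(s,x,y)\geq (1-e^{-rT})\sum_{k\geq 0}e^{-rkT}\rho(y)=\rho(y)$.

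For $(\zeta)$ I would argue that, for each fixed $k$, the map $(s,x,y)\mapsto L(s+kT,x,y)$ is $\overline{\mathfrak B}([0,T])\otimes\overline{\mathfrak B}(\R^n)\otimes\overline{\mathfrak B}(\R^n)$-measurable, being the composition of the measurable shift $(s,x,y)\mapsto(s+kT,x,y)$ with the measurable $L$; then each partial sum $S_m$ is measurable, and $\mathcal A_1(L)=\sup_m S_m$ is measurable into $([0,+\infty],\overline{\mathfrak B}([0,+\infty]))$. Finally $(\eta)$ is just the conjunction of $(\gamma)$ (continuity in $(x,y)$ for a.e.\ $s$) and $(\zeta)$ (joint measurability), which is exactly the Carath\'eodory property, together with the fact that by Theorem~\ref{def-averaging} $\mathcal A_1(L)(s,x,y)$ is finite for a.e.\ $s$ once $L(\cdot,x,y)\in\mathcal L^1(\R_+,\mu_r)$. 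The main obstacle is part $(\beta)$: unlike all the others it cannot be handled by a bare monotone-limit argument, and one must genuinely control the tail of the series uniformly on compact sets of the $(x,y)$ variables; this is where the standing hypothesis $L\geq 0$ (so that $\mathcal A_1(L)$ is an increasing limit and the tails are automatically small where the full series converges) does the decisive work, and where one should be a little careful about the ``for a.e.\ $s$'' quantifier interacting with the uncountably many points $(x,y)$.
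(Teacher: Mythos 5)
Your treatment of $(\alpha)$, $(\delta)$, $(\epsilon)$, $(\zeta)$ and of the ``a.e.\ $s$'' quantifier (translating the exceptional null set by $-kT$ and taking a countable union) is correct and essentially the paper's own argument: the paper phrases $(\alpha)$ as Fatou's lemma for the finite measure $\chi=(1-e^{-rT})\sum_{k}e^{-rkT}\delta_k$ on $\N$, and your ``increasing sup of l.s.c.\ partial sums'' formulation is an equivalent (if anything cleaner) way to say the same thing. The genuine problem is your plan for $(\beta)$, which then contaminates $(\gamma)$ and $(\eta)$.

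The step that would fail is the claimed ``Weierstrass-type domination on a compact neighbourhood, using that $L\geq 0$ lets us bound the tail uniformly.'' Nonnegativity gives no upper control whatsoever: convergence of $\sum_k e^{-rkT}L(s+kT,x_0,y_0)$ at one point (or even at every point) does not make the tails small uniformly near $(x_0,y_0)$. Concretely, take $n=1$ and $L(t,x,y)=e^{rkT}\max\bigl(0,\,1-k\vert x-\tfrac1k\vert\bigr)$ for $t\in[kT,(k+1)T)$, $k\geq 1$, with $L=0$ on $[0,T)$: each $L(t,\cdot,\cdot)$ is continuous and nonnegative, $L(\cdot,x,y)\in{\mathcal L}^1(\R_+,\mu_r;\R)$ for every fixed $(x,y)$, and at each $x$ only finitely many terms of the series defining ${\mathcal A}_1(L)(s,\cdot,\cdot)$ are nonzero; yet ${\mathcal A}_1(L)(s,1/q,y)\geq 1-e^{-rT}$ for every $q$ while ${\mathcal A}_1(L)(s,0,y)=0$, so the convergence is not locally uniform and upper semicontinuity actually fails at $x=0$ for every $s$. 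Hence no argument based only on the stated hypotheses can produce your uniform tail bound; some local summable majorant (reverse Fatou / dominated convergence hypothesis) has to be added. Note also that your appeal to Theorem \ref{def-averaging} already presupposes $L(\cdot,x,y)\in{\mathcal L}^1(\R_+,\mu_r;\R)$, which is not among the hypotheses of $(\beta)$ — and, as the example shows, would not suffice anyway. The paper's route to $(\beta)$ is entirely different and one line long: it writes ${\mathcal A}_1(-L)=-{\mathcal A}_1(L)$ and applies $(\alpha)$ to $-L$, i.e.\ a reverse Fatou inequality with respect to the finite measure on $\N$; to be fair, that too silently requires an integrable majorant which the bare hypotheses do not supply, but it does not rest on the incorrect claim that $L\geq 0$ controls the tails. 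You were right to single out $(\beta)$ as the delicate point; the cure is an additional domination assumption (or simply relying on $(\alpha)$, $(\delta)$, $(\epsilon)$ and measurability, which is what the existence results, Theorems \ref{th25} and \ref{th44}, really need, as the remark following them indicates), not nonnegativity.
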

\begin{proof} 

($\alpha$) We first establish the following assertion.
\begin{equation}\label{eq16}
\forall A \in {\mathfrak B}(\R_+), \forall p \in \R_+ \; {\rm s.t.} \; A - p \subset \R_+, \; \mu_r(A-p) \leq e^{-rp} \mu_r(A).
\end{equation}
Indeed, we have 
\begin{eqnarray*}
\mu_r(A-p) &=& \int_{A-p} 1 d \mu_r(t) = \int_{\R_+} 1_{A-p}(t) d \mu_r (t)\\
\null & = & \int_{\R_+} 1_{A}(t+p) d \mu_r(t) = \int_0^{+ \infty} e^{-rt}  1_{A}(t+p) d t\\
\null & = & \int_p^{+ \infty} e^{-rs} e^{-rp} 1_A(s) ds\\
\null & \leq & e^{-rp}\int_0^{+ \infty} e^{-rs} 1_A(s) ds =  e^{-rp}\mu_r(A),
\end{eqnarray*}
and \eqref{eq16} follows. Next, we prove that
\begin{equation}\label{eq17}
\forall B \in {\mathfrak B}([0,T]), \; \mu_r(B) = 0 \Longrightarrow \mu(B) = 0.
\end{equation}
We have 
\begin{eqnarray*}
0 &= & \mu_r(B) = \int_{\R_+} 1_B(t) d \mu_r(t) = \int_{[0,T]} 1_B(t) d \mu_r(t)\\
\null & = & \int-0^T e^{-rt} 1_B(t) dt \geq e^{-rT}  \int-0^T  1_B(t) dt\\
\null & = & e^{-rT} \mu(B) \geq 0 \Longrightarrow \mu(B) = 0.
\end{eqnarray*}
So (\ref{eq17}) is proven.  Let us now set
$$S := \{ t \in \R_+ : L(t, .,.) \; {\rm is} \; {\rm l.s.c.} \},$$
and
$$S_1 := \{ s \in [0,T] : \forall k \in \N, s + kT \in S \}.$$
From the assumption we know that $\R_+ \setminus S$ is $\mu_r$-negligible. Notice that 
\begin{eqnarray*}
[0,T] \setminus S_1 &=& \{ s \in [0,T]; \exists k \in \N \; {\rm s.t.} \; s \in \R_+ \setminus S - kT \}\\
\null & = & \bigcup_{k \in \N} ((\R_+ \setminus S) - kT).
\end{eqnarray*}
Since $(\R_+ \setminus S)$ is $\mu_r$-negligible, there exists $Z \in {\mathfrak B}(\R_+)$ such that  $(\R_+ \setminus S) \subset Z$ and $\mu_r(Z) = 0$. From \eqref{eq16}, we obtain that $\mu_r(Z - kT) = 0$ which implies that $((\R_+ \setminus S) - kT)$ is $\mu_r$-negligible for all $k \in \N$. Since a countable union of $\mu_r$-negligible sets is $\mu_r$-negligible, we obtain that $[0,T] \setminus S_1$ is $\mu_r$-negligible. Therefore there exists $W \in {\mathfrak B}(\R_+)$ such that $[0,T] \setminus S_1 \subset W$ and $\mu_r(W)=0$. Replacing $W$ by $W \cap [0,T]$ we can assume that $W \subset [0,T]$. This leads with \eqref{eq17} to $\mu(B) = 0$, and consequently we can say that $[0,T] \setminus S_1$ is $\mu$-negligible. Consequently, for a.e. $s \in [0,T]$, $L(s+ kt,.,.)$ is l.s.c. for all $k \in \N$.
Let $\chi_0 : 2^{\N} \rightarrow [0, + \infty]$ be the counting measure on $\N$ and let $\chi$ be the positive measure with density $\xi$ with respect to $\chi_0$, where $\xi = [k \mapsto \xi_k]$, from $\N$ into $\R_+$, is defined by $\xi_k :=  (1- e^{-rT}) e^{-rkT}$. We then have 
$${\mathcal A}_1(L)(s,x,y) = \int_{\N} L(s + kT,x,y) d \chi(k).$$
Let us arbitrarily fix $(s,x,y) \in [0,T] \times \R^n \times \R^n$. Let $(x_q,y_q)$ be a sequence into $\R^n \times \R^n$ which converges to $(x,y)$. For all $q \in \N$, we set $\varphi_q(k) := L(s + kT,x,y)$. We have 
\begin{equation}\label{eq18}
   {\mathcal A}_1(L)(s,x_q,y_q) = \int_{\N} \varphi_q(k) d \chi(k).
\end{equation}
The B. Levi theorem provides
\begin{equation}\label{eq19}
\int_{\N} \liminf_{q \rightarrow + \infty}  \varphi_q(k) d \chi(k) \leq \liminf_{q \rightarrow + \infty} \int_{\N} \varphi_q(k) d \chi(k).
\end{equation}
Since $L(t,.,.)$ is l.s.c., we have
$$\liminf_{q \rightarrow + \infty}  \varphi_q(k) = \liminf_{q \rightarrow + \infty} L(s + kT,x_q,y_q) \geq L(s + kT, x,y),$$
therefore
$${\mathcal A}_1(L)(s,x,y) \leq \int_{\N} \liminf_{q \rightarrow + \infty}  \varphi_q(k) d \chi(k).$$
Using \eqref{eq18} and \eqref{eq19}, we obtain
$${\mathcal A}_1(L)(s,x,y) \leq  \liminf_{q \rightarrow + \infty} {\mathcal A}_1(L)(s,x_q,y_q),$$
and the conclusion.

($\beta$) Since ${\mathcal A}_1(-L)=-{\mathcal A}_1(L)$, it is a mere consequence of $(\alpha))$. ($\gamma$) immediately follows from ($\alpha$) and ($\beta$).

($\delta$) We set
\[C_1 := \{ s \in [0,T] : \forall k \in \N, \forall x \in \R^n, L(s + kT,x,.) \;{\rm is} \; {\rm convex} \}.
\]
Arguing as in the proof of ($\alpha$), we obtain that for $\mu$-a.e. $s \in [0,T]$, for all $k \in \N$ and for all $x \in \R^n$, $L(s+ kT,x,.)$ is convex.

Let $(s,x) \in C_1 \times \R^n$. Let $y$ ,$y_1 \in \R^n$ and $\lambda \in (0,1)$. Then we have 
\begin{multline*}
{\mathcal A}_1(L)(s,x, (1- \lambda) y + \lambda y_1)
=  (1- e^{-rT}) \sum_{k=0}^{+ \infty} e^{-rkT} L(t + kT,x, (1- \lambda) y + \lambda y_1)\\
\leq  (1- e^{-rT}) \sum_{k=0}^{+ \infty} e^{-rkT} ( (1- \lambda) L(t + kT,x,y) + \lambda  L(t + kT,x,y_1))\\
=   (1- \lambda)(1- e^{-rT}) \sum_{k=0}^{+ \infty} e^{-rkT}L(t + kT,x,y)
   +\lambda   (1- e^{-rT})   \sum_{k=0}^{+ \infty} e^{-rkT} L(t + kT,x,y_1)\\
= (1- \lambda) {\mathcal A}_1(L)(s,x,y) +   \lambda  {\mathcal A}_1(L)(s,x,y_1),
\end{multline*}
and the convexity is proven.

($\epsilon$) For any $(s,x,y) \in [0,T] \times \R^n \times \R^n$, we have
\begin{eqnarray*}
 {\mathcal A}_1(L)(s,x,y) &= &  (1- e^{-rT}) \sum_{k=0}^{+ \infty} e^{-rkT} L(t + kT,x,y)\\
\null & \geq &(1- e^{-rT}) \sum_{k=0}^{+ \infty} e^{-rkT} \rho(y) = \rho(y).
\end{eqnarray*}

($\zeta$) $[(s,x,y) \mapsto (s+kT,x,y) \mapsto L(s+ kT,x,y)]$ is measurable as a composition of measurable functions. Since a linear combination of measurable functions is measurable, and since a limit of measurable functions is measurable, ${\mathcal A}_1(L)$ is measurable.

($\eta$) From ($\gamma$), it is sufficient to prove that, for all $(x,y) \in  \R^n \times \R^n$, ${\mathcal A}_1(L)( \cdot, x,y)$ is measurable when $L( \cdot, x,y)$ is measurable.
Notice that $[s \mapsto L(s + kT, x,y)]$ is measurable as a composition of measurable functions, hence $[s \mapsto e^{-rkT} L(s + kT, x,y)]$ is measurable as a product of measurable functions. Finally, for any integer ${\ell}$, the map $[s \mapsto \sum_{k=0}^{\ell} e^{-rkT} L(s + kT, x,y)]$ is measurable as a finite sum of measurable functions, and ${\mathcal A}_1(L) (\cdot, x, y)$ is measurable as a limit of measurable functions.
\end{proof}

We can now state our first main result on existence of solutions for the problem
\begin{equation}\label{eq45}
\left.
\begin{array}{rl}
{\rm Minimize} & \int_0^{+ \infty} e^{-rt} L(t,x(t), x'(t)) dt\\
{\rm when} & x \in x_0 + W^{1,1}_{T,0}(\R_+,\mu_r; \R^n).
\end{array}
\right\}
\end{equation}
where $\eta \in \R^n$ is fixed and $x_0 \in  W^{1,1}(\R_+, \R^n) \cap \overline{P_T^0}^\alpha(\R_+, \R^n)$ satisfies $x_0(0) = \eta$.

\begin{theorem}\label{th25}  Let $L : \R_+ \times \R^n \times \R^n \rightarrow \R_+$ be a function which satisfies the following conditions.
\begin{itemize}
\item[(a)] $L$ is a Caratheodory function.
\item[(b)] For a.e. $t \in \R_+$, for all $x \in \R^n$, $L(t,x, \cdot)$ is convex on $\R^n$.
\item[(c)] There exists $\rho : \R^n \rightarrow \R_+$ such that $\lim_{\vert y \vert \rightarrow + \infty} \frac{\rho(y)}{\vert y \vert} = + \infty$ and, for all $(t,x,y) \in \R_+ \times \R^n \times \R^n$, $L(t,x,y) \geq \rho(y)$.
\item[(d)] There exists $\tilde{x}\in x_0 + W^{1,1}_{T,0}(\R_+,\mu_r; \R^n)$ such that
\[
\int_0^{+ \infty} e^{-rt} L(t,x(t), x'(t)) dt < +\infty.
\]
\end{itemize}
Then Problem (\ref{eq45}) possesses a solution.
\end{theorem}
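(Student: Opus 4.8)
The plan is to apply the direct method of the calculus of variations, but crucially \emph{after} transporting the problem to the finite interval $[0,T]$ via the extension operator $\mathcal{E}_T$ and the averaging operator $\mathcal{A}$. First I would use the identity \eqref{Eq:Averaging}: for any $x\in x_0+W^{1,1}_{T,0}(\R_+,\mu_r;\R^n)$, writing $x_T=x_{|[0,T)}$, we have
\[
\int_0^{+\infty}e^{-rt}L(t,x(t),x'(t))\,dt=\frac{1}{1-e^{-rT}}\int_0^T e^{-rt}\,\mathcal{A}_1(L)(t,x_T(t),x_T'(t))\,dt,
\]
so that minimizing the infinite-horizon functional over $x_0+W^{1,1}_{T,0}(\R_+,\mu_r;\R^n)$ is equivalent, via the bijection $x\mapsto x_T$ furnished by Proposition~\ref{pro26}, to minimizing the right-hand functional over $x_{0,T}+W^{1,1}_0([0,T);\R^n)$. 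I would need a short lemma (or just cite the formal computation from the introduction, now made rigorous using Proposition~\ref{pro25}(ii) applied to $L(\cdot,x_T(t),x_T'(t))$ together with Theorem~\ref{def-averaging} to guarantee the right-hand side is finite) establishing this identity.

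Next I would check that the averaged Lagrangian $\Lambda:=\mathcal{A}_1(L)$ on $[0,T]\times\R^n\times\R^n$ inherits all the hypotheses needed for a standard existence theorem on a bounded interval: by Lemma~\ref{lem41}, $\Lambda$ is a Carath\'eodory function (($\eta$)), $\Lambda(t,x,\cdot)$ is convex (($\delta$)), and $\Lambda(t,x,y)\geq\rho(y)$ with $\rho$ superlinear (($\epsilon$)). Condition (d) guarantees the admissible set is nonempty with finite value after transport (using that $\tilde x_{|[0,T)}\in \tilde x_{0,T}+W^{1,1}_0([0,T);\R^n)$ and the identity above). With these four facts in hand—Carath\'eodory, convexity in the velocity variable, superlinear coercivity, nonempty finite-value admissible set—the classical Tonelli-type existence theorem on $[0,T]$ (for instance the result quoted from \cite{BGH} in the introduction, or Dacorogna's direct method) yields a minimizer $\hat x_T\in x_{0,T}+W^{1,1}_0([0,T);\R^n)$.

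Finally I would pull the minimizer back: set $\hat x:=x_0+\mathcal{E}_T(\hat x_T-x_{0,T})$, which lies in $x_0+W^{1,1}_{T,0}(\R_+,\mu_r;\R^n)$ by Proposition~\ref{pro26} (here one uses $x_0\in\overline{P_T^0}(\R_+,\R^n)$ so that $x_0=\mathcal{E}_T(x_{0,T})$ and the boundary values match). The identity \eqref{Eq:Averaging} shows $\hat x$ achieves the infimum of the original problem, because the transport is value-preserving up to the positive constant $1/(1-e^{-rT})$ and bijective between the two admissible sets.

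The main obstacle I anticipate is not the direct method itself—that is standard once we are on $[0,T]$—but the careful rigorous justification of the averaging identity \eqref{Eq:Averaging} for an \emph{arbitrary} admissible $W^{1,1}$ function rather than a smooth one: one must verify that the termwise integration (Fubini/Beppo Levi) is legitimate, that $\mathcal{A}_1(L)(t,x_T(t),x_T'(t))$ is a measurable and integrable function of $t$ (invoking the Carath\'eodory property of $\Lambda$ and the superposition/Nemytskii measurability), and that the nonnegativity of $L$ makes all the manipulations licit with values in $[0,+\infty]$. A secondary technical point is matching the boundary/periodicity data precisely so that $x\leftrightarrow x_T$ really is a bijection between $x_0+W^{1,1}_{T,0}(\R_+,\mu_r;\R^n)$ and $x_{0,T}+W^{1,1}_0([0,T);\R^n)$, which relies on the hypothesis $x_0\in W^{1,1}(\R_+,\R^n)\cap\overline{P_T^0}(\R_+,\R^n)$ with $x_0(0)=\eta$.
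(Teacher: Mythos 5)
Your proposal is correct and follows essentially the same route as the paper: reduce to the finite-horizon problem on $[0,T]$ for the averaged Lagrangian ${\mathcal A}_1(L)$, transfer hypotheses (a)--(c) via Lemma \ref{lem41} (($\eta$), ($\delta$), ($\epsilon$)), use (d) for finiteness, invoke the existence result of \cite{BGH} on the bounded interval, and pull the minimizer back with ${\mathcal E}_T$ through the averaging identity. If anything you are more careful than the paper's own proof, which states the reduced problem \eqref{eq46} without the weight $e^{-rs}$ while the value-preserving identity carries it, and which leaves implicit the bijection between admissible sets and the measurability/integrability justification of the termwise computation that you rightly flag.
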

\begin{proof}
 We consider the following problem
\begin{equation}\label{eq46}
\left.
\begin{array}{rl}
{\rm Minimize} & \int_0^T {\mathcal A}_1(L)(s, u(s), u'(s)) d s\\
{\rm when} & u \in W^{1,1}(0,T; \R^n), \; u(0) = u(T) = \eta.
\end{array}
\right\}
\end{equation}
From (a), using ($\eta$), ($\delta$) and ($\epsilon$) in Lemma \ref{lem41}, we get that ${\mathcal A}_1(L))$ has the following properties:
\begin{itemize}
\item ${\mathcal A}_1(L)$ is a Caratheodory function.
\item For a.e. $t \in [0,T]$, and for all $x \in \R^n$, the function ${\mathcal A}_1(L)(t,x,.)$ is convex.
\item For all $(s,x,y) \in [0,T] \times \R^n \times \R^n$, ${\mathcal A}_1(L)(s,x,y) \geq \rho(y)$, where $\rho$ is superlinear.
\end{itemize}
By (d), the problem \eqref{eq46} does not take $+\infty$ value and according to \cite{BGH} (Remark 1, p.115), it admits a solution $\hat{u}$.
Finally, since
\[
\int_{\R_+} e^{-rt} L(t,x(t), x'(t)) dt = \frac{1}{1- e^{-rT}} \int_0^T e^{-rs} {\mathcal A}_1(L)(s, x(s), x'(s)) ds,
\]
we obtain that $\hat{x} := {\mathcal E}_T(\hat{u})$ is a solution of  Problem \eqref{eq45}.
\end{proof}

This existence result can be extended to the Sobolev spaces $W^{1, \alpha}(\R_+, \R^n)$ with $\alpha \in (1, + \infty)$. Let us set
\begin{equation}\label{eq47}
\left.
\begin{array}{rl}
{\rm Minimize} & \int_0^{+ \infty} e^{-rt} L(t,x(t), x'(t)) dt\\
{\rm when} & x \in x_0 + W^{1,\alpha}_{T,0}(\R_+,\mu_r; \R^n).
\end{array}
\right\}
\end{equation}
where $\eta \in \R^n$ is fixed and $x_0 \in  W^{1,\alpha}(\R_+, \R^n) \cap \overline{P_T^0}^\alpha(\R_+, \R^n)$ satisfies $x_0(0) = \eta$.
\begin{theorem}\label{th44}
Let $L : \R_+ \times \R^n \times \R^n \rightarrow \R_+$ be a function which satisfies the following conditions.
\begin{itemize}
\item[(a)] $L$ is a Caratheodory function.
\item[(b)] For all $(t,x) \in \R_+ \times \R^n$, $L(t,x, \cdot)$ is convex.
\item[(c)] There exist $a \in L^1(\R_+, \mu_r; \R_+)$ and $b \in (0, + \infty)$ such that $L(t,x,y) \geq a(t) + b \cdot \vert y \vert^{\alpha}$ for a.e. $t \in \R_+$ and for all $(x, y) \in \R^n \times \R^n$.
\item[(d)] There exists $\tilde{x} \in  x_0 +W^{1, \alpha}_{T, 0}(\R_+, \mu_r ; \R^n)$ such that $I(\tilde{u}) < + \infty$.
\end{itemize}
Then Problem (\ref{eq31}) possesses a solution.
\end{theorem}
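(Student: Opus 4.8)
The plan is to mirror the proof of Theorem~\ref{th25}, reducing \eqref{eq47} to a finite‑horizon problem on $[0,T]$; the only genuinely new point is to check that the coercivity assumption (c), which now carries a $t$‑dependent lower bound, survives the averaging. Write $u_0:=(x_0)_{|[0,T)}\in W^{1,\alpha}(0,T;\R^n)$; since $x_0$ is continuous and $T$‑periodic one has $u_0(0)=u_0(T)=\eta$. Consider the auxiliary problem of minimizing
\[
\int_0^T e^{-rs}\,{\mathcal A}_1(L)(s,u(s),u'(s))\,ds
\quad\text{over}\quad u\in W^{1,\alpha}(0,T;\R^n),\ u(0)=u(T)=\eta .
\]
By Proposition~\ref{pro26}, $u\mapsto{\mathcal E}_T(u)$ is a bijection from the admissible set of this problem onto $x_0+W^{1,\alpha}_{T,0}(\R_+,\mu_r;\R^n)$, with inverse the restriction to $[0,T)$; and, exactly as in the proof of Theorem~\ref{th25} (using $L\geq 0$ to justify interchanging sum and integral by Tonelli's theorem, and ${\mathcal E}_T(u)'={\mathcal E}_T(u')$ a.e.), every admissible $x={\mathcal E}_T(u)$ satisfies
\[
\int_{\R_+} e^{-rt} L(t,x(t),x'(t))\,dt=\frac{1}{1-e^{-rT}}\int_0^T e^{-rs}\,{\mathcal A}_1(L)(s,u(s),u'(s))\,ds .
\]
Hence it suffices to solve the auxiliary problem and to set $\hat x:={\mathcal E}_T(\hat u)$.

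I would then translate the hypotheses on $L$ into hypotheses on the finite‑horizon Lagrangian $\Lambda(s,x,y):=e^{-rs}{\mathcal A}_1(L)(s,x,y)$. By (a) and Lemma~\ref{lem41}($\eta$), ${\mathcal A}_1(L)$ is Caratheodory, hence so is $\Lambda$ (because $s\mapsto e^{-rs}$ is continuous and positive on $[0,T]$); by (b) and Lemma~\ref{lem41}($\delta$), $\Lambda(s,x,\cdot)$ is convex for a.e.\ $s$ and all $x$. For the coercivity, note that $a\geq 0$, so (c) already gives $L(t,x,y)\geq b\,|y|^\alpha$; applying Lemma~\ref{lem41}($\epsilon$) with $\rho(y)=b|y|^\alpha$ yields ${\mathcal A}_1(L)(s,x,y)\geq b\,|y|^\alpha$, hence $\Lambda(s,x,y)\geq b\,e^{-rT}|y|^\alpha\geq 0$ on $[0,T]\times\R^n\times\R^n$. (Applying instead ${\mathcal A}$ to the full inequality of (c) and using ${\mathcal A}(\mathbf{1})=1$ together with Theorem~\ref{def-averaging}, which gives ${\mathcal A}(a)\in L^1(0,T;\R_+)$, even produces the sharper bound ${\mathcal A}_1(L)(s,x,y)\geq {\mathcal A}(a)(s)+b|y|^\alpha$.)

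With $\Lambda$ a nonnegative Caratheodory integrand, convex in the velocity variable, and coercive with a bound of the form $\tilde a(s)+b'|y|^\alpha$ ($\tilde a\equiv 0\in L^1$, $b'=b\,e^{-rT}>0$, $\alpha>1$), the auxiliary problem is covered by the classical direct‑method existence theorem for one‑dimensional variational problems in $W^{1,\alpha}(0,T;\R^n)$ with Dirichlet data (as in \cite{BGH}). Assumption (d) guarantees the infimum is finite: $\tilde u:=(\tilde x)_{|[0,T)}$ is admissible and $\int_0^T e^{-rs}{\mathcal A}_1(L)(s,\tilde u(s),\tilde u'(s))\,ds=(1-e^{-rT})\int_{\R_+}e^{-rt}L(t,\tilde x(t),\tilde x'(t))\,dt<+\infty$. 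Let $\hat u$ be a minimizer. Then, by the identity of the first paragraph and the bijectivity of ${\mathcal E}_T$, $\hat x:={\mathcal E}_T(\hat u)\in x_0+W^{1,\alpha}_{T,0}(\R_+,\mu_r;\R^n)$ minimizes $\int_{\R_+}e^{-rt}L(t,x(t),x'(t))\,dt$ over the admissible set, i.e.\ it solves \eqref{eq47}.

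The only step requiring care is the transfer of the coercivity condition: one must ensure that averaging does not degrade the growth exponent $\alpha$. This is precisely what Lemma~\ref{lem41}($\epsilon$) delivers (with, if one wishes to retain the integrable part $a(t)$ of the bound, Theorem~\ref{def-averaging} and the identity ${\mathcal A}(\mathbf{1})=1$); the remaining arguments are the same reduction‑and‑lift bookkeeping already used for Theorem~\ref{th25}.
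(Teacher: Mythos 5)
Your proposal is correct and follows essentially the same route as the paper: reduce \eqref{eq47} to the finite-horizon Dirichlet problem for the averaged Lagrangian via ${\mathcal E}_T$ and Lemma \ref{lem41}, transfer the coercivity (the paper keeps ${\mathcal A}(a)(s)+b|y|^\alpha$ using Theorem \ref{def-averaging}, you also note the simpler bound $b|y|^\alpha$ since $a\geq 0$), apply a classical direct-method existence theorem (the paper cites Dacorogna, Theorem 4.1), and lift the minimizer back with ${\mathcal E}_T$. Your explicit inclusion of the weight $e^{-rs}$ in the reduced functional is, if anything, a slightly more careful bookkeeping of the identity \eqref{Eq:Averaging} than the paper's statement of \eqref{eq48}.
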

\begin{proof}
We consider the following problem
\begin{equation}\label{eq48}
\left.
\begin{array}{rl}
{\rm Minimize} & \int_0^T {\mathcal A}_1(L)(s, u(s), u'(s)) d s\\
{\rm when} & u \in W^{1,\alpha}(0,T; \R^n), \; u(0) = u(T) = \eta.
\end{array}
\right\}
\end{equation}
As in the proof of Theorem \ref{th25}, the assumptions (a), (b) and (d) imply the same properties for ${\mathcal A}_1$ on $[0,T]\times\R^n\times\R^n$. From assumption (c) we obtain that ${\mathcal A}_1(L)(s,x,y) \geq {\mathcal A}(a)(s) + b \cdot \vert y \vert^{\alpha}$ for a.e. $s \in [0,T]$ and for all $(x,y) \in  \R^n \times \R^n$, and using Theorem \ref{def-averaging} we know thet $ {\mathcal A}(a) \in L^1(0,T; \R)$.\\
Consequently all the assumptions of Theorem 4.1 in \cite{Da} (p. 82) are fulfilled for Problem (\ref{eq48}) which allows us to assert that there exists $\hat{u}$ a solution of Problem (\ref{eq48}). To conclude, it suffices to verify that $\hat{x} := {\mathcal E}_T(\hat{u})$ is a solution of Problem (\ref{eq47}).
\end{proof}

{\bf Remarks.}
\begin{itemize}
\item In theorem \ref{th44}, the assumption (d) is ensured as soon as $L$ is supposed to have a polynomial growth in the third variable. In this case, $L$ can even take nonpositive values, since upper integrals turn into regular integrals. Notice that such arguments can not extend to the $W^{1,1}$ setting.
\item According to \cite{BGH}, in both theorems \ref{th25} and \ref{th44}, the assumption (a) can be replaced by requiring that $L$ is globally measurable and that for a.e. $t\in\R_+$, the map $L(t,\cdot,\cdot)$ is lower semi-continuous.
\end{itemize}
\section{Necessary conditions of optimality}
We do not treat the question of the Euler-Lagrange equation in the setting $W^{1,1}$. Indded, for boundary value problems, the authors of \cite{BGH} say, in the point (b) in p. 139, that even the Euler-Lagrange equation may fail for solutions issued from a Tonelli's partial regularity theorem. We will here only consider the case $W^{1, \alpha}$ with $\alpha \in (1, + \infty)$. The Euler-Lagrange equation appears via a regularity result, under strictly stronger assumptions on hte Lagrangian. As in the previous section, we first prove some preliminary results about the properties of the averaged Lagrangian.

\smallskip

For any finite dimensional normed real vector space $E$ and for any map $\Phi : \R_+ \times \R^n \times \R^n \rightarrow E$, we consider the following properties :

\medskip

(P1) $\forall (x,y) \in \R^n \times \R_n$, $\forall \epsilon > 0$, $\exists \delta  > 0$, $\forall t, t_1 \in \R_+$, $\forall (x_1,y_1) \in \R^n \times \R_n$ s.t.
\[
( \vert t-t_1 \vert \leq \delta, \vert x-x_1 \vert \leq \delta, \vert y-y_1 \vert \leq \delta) \Longrightarrow \vert \Phi(t,x,y) - \Phi(t_1, x_1,y_1) \vert \leq \epsilon.
\]

\smallskip

(P2) $\Phi \in C^1(\R_+ \times \R^n \times \R_n, E)$ and $\forall (x,y) \in \R^n \times \R_n$, $\forall \epsilon > 0$, $\exists \eta > 0$ s.t.
$\forall t, t_1 \in \R_+$, $\forall (x_1,y_1) \in \R^n \times \R_n$,
\begin{multline*}
( \vert t-t_1 \vert \leq \eta, \vert x-x_1 \vert \leq \eta, \vert y-y_1 \vert \leq \eta) \Longrightarrow \\
\frac{\vert \Phi(t_1,x_1,y_1) - \Phi(t,x,y) - D \Phi(t,x,y)(t_1 -t, x_1 -x, y_1 -y) \vert}{(\vert t-t_1 \vert + \vert x-x_1 \vert +\vert y-y_1 \vert)} \leq \epsilon.
\end{multline*}

\smallskip

(P3) The partial differential $D_3\phi(t,x,y)$ exists for any $(t,x,y) \in  \R_+ \times \R^n \times \R_n$ and satisfies the following condition: $\forall (x,y) \in \R^n \times \R_n$, $\forall \epsilon > 0$, $\exists \beta = > 0$, $\forall z \in \R^n$,
\[
\vert z \vert \leq \beta \Longrightarrow
\forall t\in\R_+, \ \vert \Phi(t,x,y +z) - \Phi(t,x,y) - D_3 \Phi(t,x,y)z \vert \leq \epsilon \vert z \vert.
\]

\begin{lemma}\label{lem42}
let $L :  \R_+ \times \R^n \times \R_n \rightarrow \R$. The following assertions hold.
\begin{itemize}
\item[(i)] If $L$ satisfies (P1) then ${\mathcal A}_1(L) \in C^0([0,T] \times \R^n \times \R^n, \R)$.
\item[(ii)] If $L$ satisfies (P1) and (P3) and if $D_3L$ satisfies (P1) then $D_3 {\mathcal A}_1(L) \in  C^0([0,T] \times \R^n \times \R^n, {\mathfrak L}(\R^n,\R))$ and $D_3 {\mathcal A}_1(L) = {\mathcal A}_1( D_3 L)$.
\item[(iii)] We assume that $L \in C^1(\R_+ \times \R^n \times \R^n, \R)$, that $L$ satisfies (P1) and (P2) and that $DL$ satisfies (P1).\\
Then ${\mathcal A}_1(L) \in C^1([0,T] \times \R^n \times \R^n, \R)$ and we have $D {\mathcal A}_1(L) = A_1(DL)$.
\item[(iv)] We assume that $L \in C^2(\R_+ \times \R^n \times \R^n, \R)$, that $L$ and $DL$ satisfy (P1) and (P2), and that $D^2L$ satisfy (P1).\\
 Then ${\mathcal A}_1(L) \in C^2([0,T] \times \R^n \times \R^n, \R)$ and we have $D {\mathcal A}_1(L) = {\mathcal A}_1(DL)$ and $D^2 {\mathcal A}_1(L) = {\mathcal A}_1(D^2 L)$.
\end{itemize}
\end{lemma}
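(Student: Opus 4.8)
The plan is to prove the four assertions of Lemma~\ref{lem42} by reducing everything to the key technical fact that the series defining ${\mathcal A}_1$ converges \emph{uniformly} on compact sets, together with the classical theorems on differentiating a uniformly convergent series of differentiable functions. The common engine is the following observation: for any map $\Phi$ satisfying (P1) and any compact $K\subset\R^n\times\R^n$, the function $\Phi$ is bounded on $\R_+\times K$ (uniform continuity in $t$ uniformly over $K$ forces boundedness), say $|\Phi(t,x,y)|\leq M_K$ for all $t\in\R_+$, $(x,y)\in K$; hence the tail $\sum_{k\geq N}(1-e^{-rT})e^{-rkT}\Phi(s+kT,x,y)$ is dominated by $M_K\,e^{-rNT}$ uniformly in $(s,x,y)\in[0,T]\times K$, so the series converges uniformly on $[0,T]\times K$.

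For (i), I would first note that each partial sum $s\mapsto\sum_{k=0}^{N}(1-e^{-rT})e^{-rkT}L(s+kT,x,y)$ is continuous on $[0,T]\times\R^n\times\R^n$ --- indeed, $(s,x,y)\mapsto L(s+kT,x,y)$ is continuous by (P1), which trivially implies joint continuity of $L$ --- and then invoke the uniform convergence established above to conclude that the sum ${\mathcal A}_1(L)$ is continuous, being a locally uniform limit of continuous functions. For (ii), I would apply the theorem on term-by-term differentiation: the series $\sum_k\xi_k L(s+kT,x,\cdot)$ converges at each point (by (i)), and the series of $y$-partial differentials $\sum_k\xi_k D_3L(s+kT,x,\cdot)$ converges uniformly on compact sets (apply the boundedness observation to $D_3L$, which satisfies (P1) by hypothesis); therefore the sum is $C^1$ in $y$ with $D_3{\mathcal A}_1(L)={\mathcal A}_1(D_3L)$, and the right-hand side is continuous on $[0,T]\times\R^n\times\R^n$ by part (i) applied to $D_3L$. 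The estimate in (P3) is used here only to guarantee that the differentiation is valid with the correct (locally uniform) remainder control; since (P3) gives a modulus independent of $t$, it transfers to the averaged function by the same series manipulation as in Lemma~\ref{lem41}.

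Assertions (iii) and (iv) are then obtained by iterating the mechanism of (ii): in (iii), $L\in C^1$ together with (P1)--(P2) and (P1) for $DL$ gives, by the same term-by-term differentiation argument applied to the \emph{full} differential $DL=(D_1L,D_2L,D_3L)$, that ${\mathcal A}_1(L)\in C^1$ with $D{\mathcal A}_1(L)={\mathcal A}_1(DL)$, the continuity of the latter coming from (i) applied to each component of $DL$. In (iv), one first applies (iii) to get $D{\mathcal A}_1(L)={\mathcal A}_1(DL)$, then applies (iii) once more to each scalar component of $DL$ (which is $C^1$, satisfies (P1)--(P2), and whose differential $D^2L$ satisfies (P1)) to obtain $D^2{\mathcal A}_1(L)={\mathcal A}_1(D^2L)\in C^0$, hence ${\mathcal A}_1(L)\in C^2$. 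I would remark that throughout, the interchange of $\sum_k$ with $D$ is legitimate precisely because of the exponential weights $\xi_k=(1-e^{-rT})e^{-rkT}$, which make all the relevant series absolutely and locally uniformly convergent.

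The main obstacle I anticipate is purely bookkeeping: one must be careful that (P1), (P2), (P3) are formulated with a modulus $\delta$ (resp. $\eta$, $\beta$) that is \emph{uniform in $t\in\R_+$}, and verify that this uniformity is exactly what survives the averaging --- i.e. that ${\mathcal A}_1(L)$ (which no longer depends on $t$) inherits genuine $C^k$ regularity in $(s,x,y)$ jointly, not merely separately. Concretely, the delicate point is passing from ``the $N$-th partial sum is $C^1$ and the tails are small uniformly on compacts, along with their first derivatives'' to ``the limit is $C^1$''; this is a standard result (uniform convergence of derivatives plus pointwise convergence of the functions implies $C^1$ convergence), and the only work is checking its hypotheses hold on each compact box $[0,T]\times\overline{B}(0,R)\times\overline{B}(0,R)$. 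Everything else --- linearity of ${\mathcal A}_1$, the identity ${\mathcal A}_1(-L)=-{\mathcal A}_1(L)$, manipulation of the geometric-type series --- is routine and parallels the arguments already carried out in Lemma~\ref{lem41}.
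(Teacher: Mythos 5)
Your overall scheme (locally uniform convergence of the weighted series plus classical term-by-term differentiation) is a legitimate alternative to the paper's argument, but its ``common engine'' as stated is false: (P1) does \emph{not} imply that $\Phi$ is bounded on $\R_+\times K$. (P1) is only a uniform-in-$t$ continuity-type condition at each $(x,y)$; it is compatible with unbounded growth in $t$. For instance $L(t,x,y)=t$ satisfies (P1) (take $\delta=\epsilon$) and is unbounded on $\R_+\times K$ for every compact $K$, so the tail bound $M_K e^{-rNT}$ you rely on, for $L$ as well as for $D_3L$ and $DL$, is unjustified as written. The gap is repairable: applying (P1) with $\epsilon=1$ and chaining in steps of length $\delta$ shows that $|\Phi(t,x,y)|\leq C(1+t)$ uniformly for $(x,y)$ in a $\delta$-ball, and a finite subcover of $K$ gives $|\Phi(t,x,y)|\leq C_K(1+t)$ on $\R_+\times K$; since $\sum_k e^{-rkT}(1+s+kT)$ has uniformly small tails on $[0,T]\times K$, your locally uniform convergence (of the series and of the differentiated series) survives, and then the term-by-term differentiation theorems do give (i)--(iv). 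You should also note explicitly that this linear-growth bound is what makes ${\mathcal A}_1(L)$ well defined in the first place.

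It is worth contrasting this with the paper's proof, which avoids all compactness and uniform-convergence machinery. Because the weights $(1-e^{-rT})e^{-rkT}$ sum to $1$ and the moduli $\delta,\eta,\beta$ in (P1), (P2), (P3) are uniform in $t$, the termwise estimates can be summed \emph{directly}: (P1) gives the same $\epsilon$--$\delta$ modulus for ${\mathcal A}_1(L)$ (hence (i)); (P3) gives the first-order remainder estimate $|{\mathcal A}_1(L)(s,x,y+z)-{\mathcal A}_1(L)(s,x,y)-{\mathcal A}_1(D_3L)(s,x,y)z|\leq\epsilon|z|$, which proves differentiability and the identity $D_3{\mathcal A}_1(L)={\mathcal A}_1(D_3L)$ straight from the definition (hence (ii), continuity coming from (i) applied to $D_3L$); (P2) plays the analogous role for the full differential in (iii); and (iv) follows by applying the previous points to $L$ and $DL$. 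So in the paper (P2) and (P3) are the key tools, whereas in your route they are essentially bypassed in favour of $C^1$ regularity of each term plus (P1) for the derivatives; that is acceptable, but then say so, rather than invoking (P3) vaguely for ``remainder control''. Once the false boundedness claim is replaced by the linear-growth estimate, your proof is correct, at the price of heavier machinery than the paper needs.
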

\begin{proof} We first prove (i). Let $(x,y) \in \R^n \times \R^n$ and $\epsilon > 0$. Let $s, s_1 \in [0,T]$, $(x_1,y_1) \in \R^n \times \R^n$ such that $\vert s-s_1 \vert \leq \delta$, $\vert x-x_1 \vert \leq \delta$ and $\vert y-y_1 \vert \leq \delta$. Then we have, for all $k \in \N$, $\vert (s + kT)- (s_1 + kT) \vert = \vert s-s_1 \vert \leq \delta$, and we get from (P1), for any $k \in \N$,
$$\vert L(s+kT,x,y) - L(s_1 + kT, x_1, y_1)\vert \leq \epsilon \Longrightarrow$$
\begin{multline*}
\vert {\mathcal A}_1(L)(s,x,y) - {\mathcal A}_1(L)(s_1,x_1,y_1) \vert \leq \\
(1-e^{-rT}) \sum_{k=0}^{+ \infty} e^{-rkT} \vert L(s+kT,x,y) - L(s_1 + kT, x_1, y_1)\vert \\ \leq 
(1-e^{-rT}) \sum_{k=0}^{+ \infty} e^{-rkT} \epsilon = \epsilon.
\end{multline*}

Let us now establish (ii). Let $(s,x,y) \in [0,T] \times \R^n \times \R^n$ and $\epsilon > 0$. Let $z \in \R^n$ such that $\vert z \vert \leq \eta$. From (P3), we have, for all $k \in \N$, 
$$\vert L(s+ kT,x,y + z) - L(s + kT,x,y) - D_3L(s+kT,x,y).z \vert \leq \epsilon \vert z \vert.$$
Then, setting $\rho := (1-e^{-rT})$, we obtain
\begin{multline*}
\null  \vert {\mathcal A}_1(L)(s,x,y+z) - {\mathcal A}_1(L)(s,x,y) - {\mathcal A}_1(D_3L)(s,x,y).z \vert \\
=  \rho\vert \sum_{k=0}^{+\infty} e^{-rkT} (L(s+kT,x,y+z) - L(s +kT,x,y) - D_3L(s+kT,x,y).z ) \vert \\
\leq  \rho \sum_{k=0}^{+\infty} e^{-rkT} \vert (L(s+kT,x,y+z) - L(s +kT,x,y) - D_3L(s+kT,x,y).z ) \vert\\
\leq  \rho \sum_{k=0}^{+\infty} e^{-rkT} \epsilon \vert z \vert = \epsilon \vert z \vert,
\end{multline*}

and the conclusion follows.

Next, we prove (iii). Let $(s,x,y) \in [0,T] \times \R^n \times \R^n$ and $\epsilon > 0$, and consider $\delta s$, $\delta x$ and $\delta y$ such that $\vert \delta s \vert \leq \eta$, 
$\vert \delta x \vert \leq \eta$ and $\vert \delta y \vert \leq \eta$, where $\eta = \eta(L, \epsilon, x,y)$ is provided by (P2). Then we have 
\begin{multline*}
\null  \vert {\mathcal A}_1(L)(s + \delta s, x + \delta x, y + \delta y) - {\mathcal A}_1(L)(s,x,y)  - {\mathcal A}_1(DL)(s,x,y) (\delta s, \delta x, \delta y) \vert\\
\leq  \sum_{k=0}^{+ \infty} e^{-rkT} \vert L(s + \delta s + kT, x + \delta x, y + \delta y) -  L(s  + kT, x , y ) \\
\null  - DL(s + kT,x,y) (\delta s, \delta x, \delta y) \vert\\
\leq \frac{1}{1 - e^{-rT}} \epsilon  ( \vert \delta s \vert + \vert \delta x \vert + \vert \delta y \vert).
\end{multline*}

These inequalities prove that ${\mathcal A}_1(L)$ is Fr\'echet differentiable at $(s,x,y)$ and that $D {\mathcal A}_1(L)(s,x,y) = {\mathcal A}_1 (DL) (s,x,y)$. Since $DL$ satisfies (P1), using (i), we can say that ${\mathcal A}_1(DL)$ is continuous, and consequently  $D({\mathcal A}_1(L))$ is continuous.

As a conclusion, (iv) is just a consequence of (i) applied to $L$ and to $DL$. 
\end{proof} 
\begin{theorem}\label{51}
Let $L : \R_+ \times \R^n \times \R^n \rightarrow \R$ be a function, where $n$ is a positive integer number, and let $\alpha \in (1, + \infty)$.\\
We assume that the following assumption are fulfilled.
\begin{itemize}
\item[(a)] $L$ is of class $C^2$ on $\R_+ \times \R^n \times \R^n $, $L$ and $DL$ satisfy the conditions (P1) and (P2) and $D^2L$ satisfies (P1).
\item[(b)] There exist constants $c_0, c_1 \in (0, + \infty)$ such that, 
$$\forall (t,x,y) \in \R_+ \times \R^n \times \R^n, c_0  \vert y \vert^{\alpha} \leq L(t,x,y) \leq c_1 (1 + \vert y \vert^{\alpha}).$$
\item[(c)] There exists a function $M : (0, + \infty) \rightarrow (0, + \infty)$ such that
\[
\begin{array}{l}
\forall (t,x,y) \in \R_+ \times \R^n \times \R^n, \vert x \vert^2 + \vert y \vert^2 \leq R^2 \Longrightarrow \\
\vert D_2 L(t,x,y) \vert + \vert D_3 L(t,x,y) \vert \leq M(R) (1 + \vert y \vert^2).
\end{array}
\]
\item[(d)] $\forall (t,x,y) \in \R_+ \times \R^n \times \R^n, \forall \xi \in \R^n \setminus \{ 0 \}, D_{33}L(t,x,y)(\xi, \xi) > 0$.
\end{itemize}
Suppose that $\hat{x}$ is a local solution of Problem (\ref{eq47}), then $\hat{x}$ is $C^2$ on $\R_+$ except at most at the points $kT$ where $k \in \N$, and satisfies the Euler equation
$$D_2L(t,\hat{x}(t), \hat{x}'(t)) = \frac{d}{dt} D_3L(t,\hat{x}(t), \hat{x}'(t))$$
for all $t \in \R_+ \setminus T\N $.
\end{theorem}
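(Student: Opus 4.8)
The plan is to transfer the problem to the finite-horizon problem on $[0,T]$ via the averaging operator, apply a known interior regularity theorem there, and then pull the conclusion back to $\R_+$ using periodicity.

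\medskip

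\textbf{Step 1: Reduction to finite horizon.} First I would recall, as in the proof of Theorem~\ref{th44}, that if $\hat x$ is a local solution of Problem~\eqref{eq47} then $\hat u := \hat x_{|[0,T]}$ is a local solution of Problem~\eqref{eq48}, since the identity
\[
\int_{\R_+} e^{-rt} L(t,x(t),x'(t))\,dt = \frac{1}{1-e^{-rT}}\int_0^T e^{-rs}{\mathcal A}_1(L)(s,x(s),x'(s))\,ds
\]
holds for any $x\in x_0 + W^{1,\alpha}_{T,0}(\R_+,\mu_r;\R^n)$ (using $\mathcal{E}_T(\hat u)=\hat x$ by periodicity and Proposition~\ref{pro25}), and the correspondence $x\leftrightarrow \mathcal{E}_T(x_{|[0,T]})$ between the two admissible sets preserves neighborhoods for the $W^{1,\alpha}$ topologies.

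\medskip

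\textbf{Step 2: Regularity for the averaged problem on $[0,T]$.} Next I would verify that $\Lambda := {\mathcal A}_1(L)$ satisfies the hypotheses of a standard Tonelli-type interior regularity theorem for one-dimensional autonomous-in-$t$-free vectorial variational problems (e.g.\ the regularity results underlying the existence theory cited from \cite{Da} or \cite{BGH}). Concretely: by Lemma~\ref{lem42}(iv), assumption (a) gives $\Lambda\in C^2([0,T]\times\R^n\times\R^n,\R)$ with $D\Lambda = {\mathcal A}_1(DL)$ and $D^2\Lambda = {\mathcal A}_1(D^2L)$. By Lemma~\ref{lem41}($\epsilon$) and the two-sided growth in (b), together with Theorem~\ref{def-averaging} applied to the constant-in-$(x,y)$ bound, $\Lambda$ inherits the coercivity and upper bound $c_0|y|^\alpha \le \Lambda(s,x,y)\le c_1(1+|y|^\alpha)$. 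The local bound (c) on $D_2L, D_3L$ transfers to $D_2\Lambda,D_3\Lambda$ by the same averaging-of-an-inequality argument (using $D_2\Lambda={\mathcal A}_1(D_2L)$, $D_3\Lambda={\mathcal A}_1(D_3L)$ from Lemma~\ref{lem42}), and the strict convexity condition (d), $D_{33}L(t,x,y)(\xi,\xi)>0$, passes to $D_{33}\Lambda(s,x,y)(\xi,\xi) = (1-e^{-rT})\sum_k e^{-rkT}D_{33}L(s+kT,x,y)(\xi,\xi) > 0$ since every term is positive. Hence the cited regularity theorem applies to $\hat u$ on the open interval $(0,T)$: $\hat u\in C^2((0,T),\R^n)$ and satisfies the Euler--Lagrange equation $D_2\Lambda(s,\hat u(s),\hat u'(s)) = \frac{d}{ds}D_3\Lambda(s,\hat u(s),\hat u'(s))$ on $(0,T)$.

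\medskip

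\textbf{Step 3: Translate back and propagate by periodicity.} Since $\hat x = \mathcal{E}_T(\hat u)$ is $T$-periodic, the regularity and the Euler equation on $(0,T)$ give, by shifting, the same on each $(kT,(k+1)T)$; that is, $\hat x$ is $C^2$ on $\R_+\setminus T\N$. Using $D_i\Lambda = {\mathcal A}_1(D_iL)$ and the defining series for ${\mathcal A}_1$, the Euler equation for $\Lambda$ at $s\in(0,T)$ reads
\[
(1-e^{-rT})\sum_{k=0}^{\infty} e^{-rkT}\Big[D_2L(s+kT,\hat x(s),\hat x'(s)) - \tfrac{d}{ds}D_3L(s+kT,\hat x(s),\hat x'(s))\Big] = 0,
\]
but by $T$-periodicity of $\hat x$ the bracket is the \emph{same} value for every $k$ (namely $D_2L(s,\hat x(s),\hat x'(s)) - \frac{d}{dt}D_3L(t,\hat x(t),\hat x'(t))|_{t=s}$), so the whole sum is that common value times $(1-e^{-rT})\sum_k e^{-rkT}=1$, forcing the pointwise Euler equation $D_2L(t,\hat x(t),\hat x'(t)) = \frac{d}{dt}D_3L(t,\hat x(t),\hat x'(t))$ for $t\in\R_+\setminus T\N$.

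\medskip

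\textbf{Main obstacle.} The delicate point is Step~2: pinning down precisely which interior regularity theorem is invoked and checking that ${\mathcal A}_1(L)$ meets its hypotheses \emph{verbatim} — in particular that the local growth bound (c) on the first derivatives and the pointwise strict convexity (d) are exactly the conditions required, and that "local solution" in the $W^{1,\alpha}$-sense on $\R_+$ transfers to a local solution on $(0,T)$ in whatever sense the regularity theorem demands (typically it suffices to be a weak extremal with the growth and convexity structure, which is automatic once $\hat u$ minimizes locally). One must also be slightly careful that the regularity is only \emph{interior} on $(0,T)$, which is exactly why the endpoints $kT$ are excluded in the statement; no claim of $C^2$ regularity across $kT$ is made, consistent with the $C^1$-but-not-$C^2$ gluing inherent to $\mathcal{E}_T$.
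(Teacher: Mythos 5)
Your Steps 1 and 2 follow the paper's proof: reduce to the finite-horizon problem \eqref{eq48} on $[0,T]$, transfer hypotheses (a)--(d) to ${\mathcal A}_1(L)$ via Lemmas \ref{lem41} and \ref{lem42}, and invoke the regularity theorem of \cite{BGH} to obtain that $\hat u$ is $C^2$ and satisfies the Euler--Lagrange equation for the averaged Lagrangian; that part is sound. The genuine gap is in Step 3. You claim that, by $T$-periodicity of $\hat x$, the summand
\[
D_2L(s+kT,\hat x(s),\hat x'(s)) - \frac{d}{ds}D_3L(s+kT,\hat x(s),\hat x'(s))
\]
is the same for every $k$. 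It is not: the periodicity of $\hat x$ only makes the second and third arguments independent of $k$; the first argument $s+kT$ still varies with $k$, and $L$ is expressly \emph{not} assumed $T$-periodic in $t$ --- this non-periodicity of the Lagrangian is the whole point of the paper (see the introduction). Hence from the vanishing of the exponentially weighted sum $\sum_k e^{-rkT}a_k(s)=0$ you cannot conclude that $a_0(s)=0$, nor that the $a_k(s)$ coincide, and the pointwise Euler equation for $L$ itself does not follow from your argument.

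What your (correct) Steps 1--2 actually deliver is the Euler--Lagrange equation for the \emph{averaged} Lagrangian, namely $D_2{\mathcal A}_1(L)(s,\hat x(s),\hat x'(s))=\frac{d}{ds}D_3{\mathcal A}_1(L)(s,\hat x(s),\hat x'(s))$, propagated by periodicity of $\hat x$ to each interval $(kT,(k+1)T)$. This is exactly what the paper's own proof establishes (it stops at ``the conclusion is simply the translation on $\hat x$ of the properties of $\hat u$''), and it is what the abstract and introduction announce as the main point (``the usual Euler-Lagrange equation is satisfied by the \emph{averaged} version of the Lagrangian''). The displayed equation involving $L$ rather than ${\mathcal A}_1(L)$ in the theorem's statement is not obtained by the paper's argument either; your Step 3 cannot repair this, and the correct conclusion to aim for is the one phrased with ${\mathcal A}_1(L)$.
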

\begin{proof}
Form assumption (a) using Lemma \ref{lem42} (iv), we know that 
\begin{equation}\label{eq51}
A_1(L) \in C^2 \left(([0,T] \times \R^n \times \R^n, \R\right). 
\end{equation}
From assumption (b) we obtain
\begin{equation}\label{eq52}
\left.
\begin{array}{l}
\exists c_0, c_1 \in (0, + \infty), \forall (s,x,y) \in [0,T] \times \R^n \times \R^n,\\
 c_0  \vert y \vert^{\alpha} \leq A_1(L)(s,x,y) \leq c_1  (1 + \vert y \vert^{\alpha}).
\end{array}
\right\}
\end{equation}
Since $D_2 A_1(L) = A_1 (D_2 L)$ and $D_3 A_1(L) = A_1( D_3 L)$ after Lemma \ref{lem42}, from assumption (c) we obtain 
\begin{equation}\label{eq53}
\left.
\begin{array}{l}
\exists M : (0, + \infty) \rightarrow (0, + \infty), \forall (s,x,y) \in [0,T] \times \R^n \times \R^n,\\
 \vert x \vert^2 + \vert y \vert^2 \leq R^2 \Longrightarrow \\
\vert D_2 A_1(L)(s,x,y) \vert + \vert D_3 A_1(L)(s,x,y) \vert \leq M(R)  (1 + \vert y \vert^2).
\end{array}
\right\}
\end{equation}
Since $D_{33} A_1(L) = A_1(D_{33}L)$ after Lemma \ref{lem42},  from assumption (d) we obtain 
\begin{equation}\label{eq54}
\forall (s,x,y) \in [0,T] \times \R^n \times \R^n, \forall \xi \in \R^n \setminus \{ 0 \}, D_{33}A_1(L)(s,x,y)(\xi, \xi) > 0.
\end{equation}
With (\ref{eq51}), (\ref{eq52}), (\ref{eq53}), (\ref{eq54}), all the assumptions of the regularity theorem given in \cite{BGH}, p. 134, are fulfilled for Problem (\ref{eq48}) and so when $\hat{x}$ is a solution of Problem (\ref{eq47}), its restriction to $[0,T]$, $\hat{u}$, is a solution of Problem (\ref{eq48}). Then using the regularity theorem on Problem (\ref{eq48}), we obtain that $\hat{u} \in C^2([0,T], \R^n)$ and  $\hat{u}$ satisfies the Euler equation at each point of $[0,T]$. The conclusion is simply the translation on $\hat{x}$ of the properties of $\hat{u}$.
\end{proof}

\medskip

\noindent{\bf Aknowledgments.} We thank Professor Rabah Tahraoui (University of Rouen) for helpful discussions which permits us to improve the paper. We also warmly thank the referees for all the valuable comments that greatly improved the paper.


%


\begin{thebibliography}{00}
%
\bibitem{AB} Z. Artstein and I. Bright, Periodic optimization suffices for infinite horizon planar optimal control, {\it SIAM J. Control Optim} {\bf 48}(8) (2010), 4963-4986.
%
\bibitem{An} F. Antoci,  Some necessary and sufficient conditions for the compactness of the embedding of weighted Sobolev spaces, {\it Ricerche Mat.} {\bf 52} (2003), 55-71.
%
\bibitem{JBAB1} J. Blot and A. Bouadi, Infinite-horizon variational principles and almost-periodic oscillations, in {\it Differential and difference equations and applications}, S. Pinelas, M. Chipot \& Z. Dosla (eds.), Springer Proceedings in Mathematics \& Statistics, Volume 47, Springer, Berlin, 2013, pp.293-309
%
\bibitem{JBPCa1} J. Blot and P. Cartigny, Bounded solutions and oscillations of convex Lagrangian systems in presence of a discount rate, {\it Z. Anal. Anwendungen}  {\bf 14} (1995) 731-750.
%
\bibitem{JBPCa2} J. Blot and  P. Cartigny, Optimality in infinite-horizon problems under signs conditions, {\it J. Optim. Theory Appl.} {\bf 106} (2000) 411-419.
%
\bibitem{JBNH1} J. Blot and  N. Hayek, Second-order necessary conditions for the infinite-horizon variational problems, {\it Math. Oper. Res.} {\bf 106}(2) (1996), 979-990.
%
\bibitem{JBNH2} J. Blot and  N. Hayek, {\it Infinite-horizon optimal control in the discrete-time framework}, Springer, New York, 2014.
%
\bibitem{JBPM} J. Blot and  P. Michel, First-order necessary conditions for the infinite-horizon variational problems, {\it J. Optim. Theory Appl.} {\bf 88} (1996) 339-364. 
%
\bibitem{Br} H. Brezis, {\it Functional analysis, Sobolev spaces and partial differential equations}, SpringerScience+Business Media, LLC, New York, 2011.
%
\bibitem{BGH} G. Buttazzo, M. Giaquinta and S. Hildebrandt, {\it One-dimensional variational problems}, Clarendon Press, Oxford, 1998.
%
\bibitem{CHL} D.A. Carlson, A.B. Haurie and A. Leizarowitz, {\it Infinite horizon optimal control; deterministic and stochastic systems}, Second revised and enlarged edition, Springer-Verlag, Berlin, 1991.
%
\bibitem{Cl} W.C. Clark, {\it Mathematical bioeconomics: optimal management of renewable resources}, Wiley Inc., Hoboken, NJ, 2005.
%
\bibitem{Co} F. Colonius, {\it Periodic optimal control}, Lectures Note in Math. ${\rm n}^o$ 1313 , Springer-Verlag, Berlin, 1988.
%
\bibitem{CK} F. Colonius and W. Kliemann, Infinite time optimal control and periodicity, {\it Appl. Math. Optim.} {\bf 20}  (1989) 113-130.
%
\bibitem{Da} B. Dacorogna, {\it Direct methods in Calculus of Variations}, Springer-Berlin, Berlin, 1989.
%
\bibitem{Ko} A. Kovaleva, {\it Optimal control of mechanical oscillations}, English edition, Springer-Verlag, Berlin, 1999.
%
\bibitem{LPW} V. Lykina, S. Pickenhaim and M. Wagner, Different interpretations of the improper integral objective in an infinite horizon control problem, {\it J. Math. Anal. Appl.} {\bf 340} (2008) 498-510.
%
\bibitem{Mad} G.S. Maddala, {\it Econometrics}, McGraw-Hill, Inc., Singapore, 1977.
%
\bibitem{P} S. Pikenhaim, V. Lykina and  M. Wagner, On the lower semicontinuity of functionals involving Lebesgue or improper Riemann integrals in infinite horizon optimmal control problems, {\it Control Cybernet.} {\bf 37} (2008) 451-468.
%
\bibitem{Ra} F.P. Ramsey, A mathematical theory of saving, {\it The. Econ. J.} {\bf 38}(152), 1928, 543-559.
%
\bibitem{Sa} T.S. Sargent, {\it Macroeconomic theory}, Second edition, Academic Press, New York, N.Y., 1986.
%
\bibitem{TGAF} L. Schwartz, {\it Topologie g\'en\'erale et analyse fonctionnelle}, Hermann, Paris, 1970.
%

\end{thebibliography}
\end{document}